\DeclareMathOperator{\acl}{acl}
\DeclareMathOperator{\dcl}{dcl} 
 \DeclareMathOperator{\id}{id}
 \DeclareMathOperator{\dom}{dom}
\DeclareMathOperator{\tp}{tp}
\newtheorem{introtheorem}{Theorem}
\newtheorem{theorem}{Theorem}[section]
\newtheorem{claim}{Claim}[theorem]
\newtheorem{corollary}[theorem]{Corollary}
\newtheorem{fact}[theorem]{Fact}
\newtheorem{lemma}[theorem]{Lemma}
\newtheorem{proposition}[theorem]{Proposition}
\theoremstyle{definition}
\newtheorem{definition}[theorem]{Definition}
\newtheorem{remark}[theorem]{Remark}
\newcommand{\Nn}{{\mathbb{N}}}
\newcommand{\Qq}{{\mathbb{Q}}}
\newcommand{\Zz}{{\mathbb {Z}}}
\newcommand{\CD}{{\mathcal D}}
\newcommand{\CL}{{\mathcal L}}
\newcommand{\CK}{{\mathcal K}}
\newcommand{\CM}{{\mathcal M}}
\newcommand{\CO}{{\mathcal O}}
\newcommand{\CF}{{\mathcal F}}
\newcommand{\0}{\emptyset}
\renewcommand{\phi}{\varphi}
\newenvironment{claimproof}[1][\proofname]
  {%
    \proof[#1]%
  }
  {%
    \endproof%
  }
\newcommand{\dclindep}[1][]{%
  \mathrel{
    \mathop{
      \vcenter{
        \hbox{\oalign{\noalign{\kern-.3ex}\hfil$\vert$\hfil\cr
              \noalign{\kern-.7ex}
              $\smile$\cr\noalign{\kern-.3ex}}}
      }
    }\displaylimits_{#1}
  }
}
\long\def\symbolfootnote[#1]#2{\begingroup%
\def\thefootnote{\fnsymbol{footnote}}\footnote[#1]{#2}\endgroup}
\def\Ind#1#2{#1\setbox0=\hbox{$#1x$}\kern\wd0\hbox to 0pt{\hss$#1\mid$\hss}
\lower.9\ht0\hbox to 0pt{\hss$#1\smile$\hss}\kern\wd0}
\def\Notind#1#2{#1\setbox0=\hbox{$#1x$}\kern\wd0\hbox to 0pt{\mathchardef
\nn=12854\hss$#1\nn$\kern1.4\wd0\hss}\hbox to
0pt{\hss$#1\mid$\hss}\lower.9\ht0 \hbox to
0pt{\hss$#1\smile$\hss}\kern\wd0}
\def\la{\langle}
\def\ra{\rangle}
\def\qp{\mathbb Q_p}
\def\dpr{\mathrm{dp\text{-}rk}}
\def\sub{\subseteq}
\title{Fields interpretable in $P$-minimal fields}
\date{\today}
\author{Yatir Halevi}
\address{Department of Mathematics, Ben Gurion University of the Negev, Be'er-Sheva 84105, Israel}
\email{yatibe@post.bgu.ac.il}
\author{Assaf Hasson}
\address{Department of Mathematics, Ben Gurion University of the Negev, Be'er-Sheva 84105, Israel}
\email{hassonas@math.bgu.ac.il}
\author{Ya'acov Peterzil}
\address{Department of Mathematics, University of Haifa, Haifa, Israel}
\email{kobi@math.haifa.ac.il}
\thanks{The first author would like to thanks the Israel Science Foundation for its support of this research (grant No. 181/16) and the Kreitman foundation fellowship. The third author was partially supported by Israel Science Foundation grant number 290/19 }
\begin{document}

\begin{abstract}
    We prove that an infinite field interpretable in a $p$-adically closed field $K$ is  definably isomorphic to a finite extension of $K$. The result remains true in any $P$-minimal field where definable functions are generically differentiable.
\end{abstract}

\maketitle

In \cite{HaPetRCVF}, interpretable fields in various expansions of real closed valued fields were classified using analysis of one dimensional quotients which are definably embedded in the field.
Here we use similar methods to classify interpretable fields in various expansions of $p$-adically closed fields.

In \cite{PilQp} Pillay proved that any field definable in $\qp$ is definably isomorphic to a finite extension of $\qp$, and asked whether the same result is true of interpretable fields. Our main theorem gives a positive answer to Pillay's question.

\begin{introtheorem}
    Let $K$ be a $P$-minimal field. If, additionally, definable functions in $K$ are generically differentiable, then any infinite field interpretable in $K$ is definably isomorphic to a finite extension of $K$. In particular, the result holds for $p$-adically closed fields.
\end{introtheorem}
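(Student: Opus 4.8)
The plan is to understand $F$ through the one--dimensional pieces it contains and to leverage the field axioms to coordinatize it by $K$. Write $F=X/E$ with $X\subseteq K^{N}$ and $E$ a definable equivalence relation, and let $d=\dim F$ be its dimension as an interpretable set. Since $F$ is an infinite field it cannot be zero--dimensional --- its additive group is divisible, whereas a zero--dimensional interpretable group in a $P$--minimal field is, up to finite index, a $\mathbb Z$--group --- so $d\ge 1$, and by the dimension theory and cell decomposition available in $K$ one finds an infinite definable subset $Q\subseteq F$ with $\dim Q=1$. Translating by $(F,+)$ we may assume $Q$ contains $0$ and some nonzero element; then the maps $x\mapsto ax$ ($a\in F^{\times}$), which act simply transitively on $F\setminus\{0\}$, carry $Q$ to a definable family $\{aQ:a\in F^{\times}\}$ of one--dimensional subsets whose union is all of $F\setminus\{0\}$.

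The heart of the proof is a structural analysis of one--dimensional interpretable quotients of $K$. Because the residue field of a $P$--minimal field is finite and its value group $\Gamma$ is a $\mathbb Z$--group, an infinite one--dimensional interpretable set is, up to finite--to--one correspondences, either a subset of $K$ or a subset of $\Gamma$; and since $\Gamma$ is not divisible, the value--group shape is incompatible with sitting additively inside a field, so in particular not all of the slices $aQ$ can be of that shape. Refining this, one extracts a definable subset $Q_{0}\subseteq F$ that is in definable bijection with an open subset of $K$, and then --- this is the step I expect to be genuinely hard --- uses the transported field operations of $F$, together with the generic differentiability of definable functions in $K$ (which forces the relevant differentiable additive maps to be $K$--linear and hence lets one recognize $K$--like structure), to produce a \emph{definable field embedding} $\sigma\colon K\hookrightarrow F$. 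This part is the analogue, in the $P$--minimal setting, of the one--dimensional analysis carried out for real closed valued fields in \cite{HaPetRCVF}.

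Once $\sigma\colon K\hookrightarrow F$ is in hand the remainder is algebraic. Via $\sigma$, $F$ becomes a definable $K$--vector space, necessarily of finite dimension $d$ (its interpretable dimension), so fixing a definable $K$--basis identifies $(F,+)$ definably with $(K^{d},+)$ and turns each multiplication map $m_{a}\colon x\mapsto ax$ into a matrix in $M_{d}(K)$ depending definably on $a$. The assignment $a\mapsto m_{a}$ is then a definable ring embedding of $F$ into $M_{d}(K)$, injective since $F$ is a field with $m_{1}=I$, and it sends $\sigma(\lambda)$ to the scalar matrix $\lambda I$; hence its image is a definable subfield $F'$ of $M_{d}(K)$ containing the scalars. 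A commutative subfield of a matrix algebra over $K$ that contains the scalars is a finite field extension of $K$, so $F$ is abstractly such an extension; choosing (using definable choice in $K$) a primitive element and its minimal polynomial makes the identification definable, giving the desired definable isomorphism of $F$ with a finite extension of $K$.

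The single serious obstacle is the middle paragraph: classifying one--dimensional interpretable quotients in a $P$--minimal field and, above all, squeezing out of the field structure of $F$ a \emph{definable} copy of $K$ inside it. Everything before is standard dimension theory and everything after is elementary algebra; the differentiability hypothesis is used precisely in that middle step, which is exactly why the conclusion is unconditional for $p$--adically closed fields, where definable functions are known to be generically differentiable.
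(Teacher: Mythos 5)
There is a genuine gap, and it sits exactly where you say you expect it: the entire technical content of the theorem is the step you leave as a black box. Saying that one ``uses the transported field operations together with generic differentiability to produce a definable field embedding $\sigma\colon K\hookrightarrow\CF$'' is not an argument, and in fact the paper never produces such an embedding as an intermediate step; it goes in the opposite direction. Concretely, the paper (i) finds a definable $Y\subseteq\CF$ of maximal dp-rank that is \emph{strongly internal} to $K$ (definably injected into some $K^m$); (ii) proves a generic linearity lemma: for a dp-minimal strongly internal $J$ and a full-rank $S\subseteq Y^2$, the map $\la x,y,z\ra\mapsto (x-b)y+z$ carries $J\times S$ into $Y$; (iii) uses this to build a type-definable ``infinitesimal'' subgroup $\nu$ of $(\CF,+)$, independent of all choices and invariant under multiplication by every scalar of $\CF$; and (iv) uses generic differentiability to endow $\nu$ with a $\CD^1$-structure and represents each $z\in\CF$ by the Jacobian at $0$ of $x\mapsto zx$, obtaining a definable ring embedding $\CF\hookrightarrow M_n(K)$. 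Only at the very end does a copy of $K$ inside $\CF$ appear, as the scalar matrices, via the fact that a dp-minimal field has no proper infinite definable subfield. None of steps (i)--(iv) is routine, and nothing in your sketch substitutes for them; in particular ``generic differentiability forces additive maps to be $K$-linear'' presupposes that you already have $\CF$ acting differentiably on some open piece of $K^n$, which is precisely what must be constructed.

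Your preliminary reduction also has two concrete defects. First, the classification of one-dimensional interpretable quotients of $K$ is itself a substantial part of the paper (Section 2), not a citable fact, and your version omits $K/\CO$, which is a genuine third possibility (up to finite-to-one maps it is neither a subset of $K$ nor of $\Gamma$) and must be excluded separately. Second, your exclusion of the value-group case via non-divisibility does not work: a subset of $\CF$ in finite-to-one correspondence with a subset of $\Gamma$ carries no group structure, so divisibility of $(\CF,+)$ says nothing about it. The paper instead rules out both $\Gamma$ and $K/\CO$ by showing that every infinite definable subset of these sorts contains definable finite sets of unbounded size (failure of elimination of $\exists^\infty$), which contradicts uniform finiteness in the finite dp-rank field $\CF$. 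Relatedly, your appeal to an ``interpretable dimension'' $d=\dim F$ is not available off the shelf: there is no elimination of imaginaries for $P$-minimal fields and no dimension theory on quotient sorts; the paper works throughout with dp-rank and explicitly constructed finite-to-one or injective maps. The algebra after the coordinatization (finite $K$-dimension, $\CF\hookrightarrow M_d(K)$, scalars) is fine and matches the paper's endgame, but as it stands the proposal assumes the theorem's hard core rather than proving it.
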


The proof  does not use elimination of imaginaries (that is not known, in general, for $P$-minimal fields). It uses implicitly the fact that $P$-minimal fields are uniformities (see \cite{SimWal} for details). The results of Sections 4-5 do  not use $P$-minimality beyond those consequences that are treated axiomatically in \cite{SimWal} (and even \cite{DolGodViscerality}). For that reason we expect that our methods can also be applied outside of the $P$-minimal context.

Note that recently Walsberg showed, \cite{WalTrace}, that a positive answer to the above question of Pillay's  would provide an example of an NIP theory not interpreting an infinite field whose Shelah expansion does. We mention  also that a positive answer to Pillay’s question on interpretable fields in $\qp$ was announced also by E. Alouf, A. Fornasiero and J. de la Nuez Gonzalez.

\vspace{.2cm}

\noindent{\bf Acknowledgements} We thank Immanuel Halupczok and Dugald Macpherson for several discussions during the preparation of the article.

\section{Background and preliminaries}

The notion of $P$-minimal fields was introduced by Haskell and Macpherson in \cite{HasMac}. Recall that a valued field $K$ is \emph{$P$-minimal} if it is $p$-valued, its value group is a $\Zz$-group and in every structure elementary equivalent to $K$ every definable subset of $K$ is quantifier-free definable in the Macintyre language for valued fields. We denote its value group by $\Gamma$ and its valuation ring by $\CO$. It is one of the main results of \cite{HasMac} that $P$-minimal fields are henselian, and therefore, as pure valued fields $p$-adically closed.

There are various cell decomposition results for $P$-minimal fields, but we will only explicitly use the fact, following from the very definition, that definable subsets of $K$ itself can be partitioned into finite many cells, as in the case of $p$-adically closed fields. I.e. every definable set is a disjunction of sets of the form
\[
\{x\in K: \gamma_1< v(x)<\gamma_2 \wedge P_n(\lambda\cdot x)\},
\]
where $P_n$ is the $n$-th power predicate, $\lambda\in K$ and $\gamma_1,\gamma_2\in \Gamma\cup\{\infty,-\infty\}$ and $n\in\mathbb{N}$.

In particular, every infinite  definable subset of $K$ has non-empty interior with respect to the valuation topology (this also follows from Simon's \cite{SimDPMinOrd}). Since any $P$-minimal field is dp-minimal, this shows that $P$-minimal fields satisfy the assumptions of Simon and Walsberg's tameness conditions for topological dp-minimal structures, \cite{SimWal}. Most of the topological properties of $P$-minimal fields we will be using below follow from this work of Simon and Walsberg, though -- in most cases -- specialised to the case of $P$-minimal fields they were already known by earlier work.

In particular, this implies that $P$-minimal fields eliminate $\exists^{\infty}$ (see \cite[Lemma 2.1]{SimWal}) in the valued field sort. We will use this fact in several places without explicit reference. We note however, that this property does not transfer to imaginary sorts (such as the value group and $K/\CO$). We will also be using the fact that $\acl$ (the model theoretic algebraic closure) satisfies the Steiniz Exchange principle in $P$-minimal fields (\cite[Corollary 6.2]{HasMac}). As a consequence, in the valued field sort of $K$, dp-rank, topological dimension and $\acl$-dimension coincide (see \cite[Proposition 2.4]{SimWal}) and dp-rank is additive. Most of the result we need from \cite{SimWal} also appear in the the work of Dolich and Goodrick on viscerality, \cite{DolGodViscerality}, and may be relevant for possible generalisations of the present work into structures with IP. 





\section{Strong internality to $K$}

Our aim in this section is, given an interpretable field $\CF$, to  find a dp-minimal $S\subseteq \CF$ that is in definable bijection with a subset of  $K$. We proceed in three steps, we first reduce the problem to subsets of $\CF$ that are in definable bijection with an infinite subset of  $K/E$, where $E$ is some definable equivalence relation on $K$. We then show that for any interpretable set $S$ of the form $K/E$, there exists a finite-to-one (partial) function $f:S\to V$ with infinite domain, where $V$ is either $K$, $\Gamma$ or $K/\CO$. Finally we show that if $S\subseteq \CF$ then this set can only be $K$. \\

\noindent We begin with several general results about definable equivalence relations in $P$-minimal fields.

\subsection{Definable one-dimensional quotients in $P$-minimal fields}

We assume that $K$ is a $P$-minimal field and $E$ a definable equivalence relation on $K$ with infinitely many classes.
 Our aim is to find a definable function with finite fibres from an infinite subset of $K/E$ into either $\Gamma$, $K/\CO$ or $K$ itself.
  We need some preperation. For $\gamma\in \Gamma$ and $a\in K$, we let
  $$B_\gamma(a)=\{x\in K:v(x-a)>\gamma\},$$ and call $\gamma$ {\em the radius of the $\gamma$-ball} $B_{\gamma}(a)$. 

By using the fact that any $P$-minimal valued field is elementary equivalent to a $p$-adic field, it is not hard to verify the following.

\begin{lemma}\label{L:ball in elem exten of p-adics}
\begin{enumerate}
\item For any $\gamma\in \Gamma$ and $n\in \mathbb N$, any $\gamma$-ball contains only finitely many $\gamma+n$-balls.
\item If a $\gamma$-ball is covered by finitely many balls with radii in $\mathbb Z$ then $\gamma\in \mathbb Z$.
\end{enumerate}
\end{lemma}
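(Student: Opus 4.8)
The plan is to deduce both items from the hypothesis that $K$ is elementarily equivalent to a finite extension $F$ of $\qp$, together with the elementary structure of $\Zz$-groups. Fix such an $F$, let $q$ be the cardinality of its residue field, and note that $e:=v(p)$ is a (standard) positive integer, visible in $F$ and hence the same in $K$.

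For (1) I would first check the statement inside $F$ by a direct computation: a $\gamma$-ball of $F$ is a coset of the additive subgroup $\{z:v(z)>\gamma\}$, and the $(\gamma+n)$-balls it contains are precisely the cosets of $\{z:v(z)>\gamma+n\}$ lying inside it, of which there are $q^{n}$. Thus the first-order sentence asserting ``for all $\gamma$ and all $a$, the ball $B_{\gamma}(a)$ contains no $q^{n}+1$ points lying in pairwise distinct $(\gamma+n)$-balls'' --- a sentence in which $\gamma+n$ is a term, since $n$ is a fixed standard integer --- holds in $F$, hence in $K$. This is exactly (1): every $\gamma$-ball of $K$ contains at most $q^{n}$ balls of radius $\gamma+n$ (in fact the analogous equality sentence transfers, but finiteness is all that is needed).

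For (2), suppose $B_{\gamma}(a)=B_{n_{1}}(a_{1})\cup\dots\cup B_{n_{k}}(a_{k})$ with each $n_{i}\in\Zz$, and assume toward a contradiction that $\gamma\notin\Zz$. Since $\Gamma$ is a $\Zz$-group it is discretely ordered and $\Zz$ is convex in it: were some integer greater than a positive non-integer $\delta$, the least such integer $n$ would force $\delta=n-1\in\Zz$. Hence $\gamma$ lies either above all of $\Zz$ or below all of $\Zz$. In the former case no ball of radius in $\Zz$ is contained in the (nonempty) ball $B_{\gamma}(a)$, as $B_{n}(b)\subseteq B_{\gamma}(a)$ forces $n\ge\gamma$; this contradicts $B_{n_{i}}(a_{i})\subseteq B_{\gamma}(a)$ (which holds because $B_{\gamma}(a)$ is their union). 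So $\gamma$ lies below every integer. Put $N:=\max_{i}n_{i}\in\Zz$. By (1), each $B_{n_{i}}(a_{i})$ is a finite union of balls of radius $N$ (as $N-n_{i}\in\Nn$), so $B_{\gamma}(a)$ is a finite union of $N$-balls and therefore contains only finitely many of them --- two $N$-balls being equal as soon as they meet. On the other hand, for all large $m\in\Nn$ the point $a+p^{-m}$ lies in $B_{\gamma}(a)$, since $v\big((a+p^{-m})-a\big)=-me$ is an integer and hence exceeds $\gamma$; and for $m<m'$ both large, $v\big((a+p^{-m})-(a+p^{-m'})\big)=-me\le N$, so these points occupy pairwise distinct $N$-balls. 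Thus $B_{\gamma}(a)$ contains infinitely many $N$-balls --- a contradiction, and so $\gamma\in\Zz$.

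The one genuinely delicate point is the bookkeeping in the value group: $\gamma$ may be non-standard, so one must keep track of which of the quantities $\gamma+n$, $\max_{i}n_{i}$, $v(p^{-m})$ are honest integers; and in (2) one must exhibit an \emph{explicit} infinite family of $N$-balls, rather than merely argue ``there are many'', to be sure that ``infinitely many'' is literal and not a non-standard count. Everything else is routine ultrametric arithmetic of balls.
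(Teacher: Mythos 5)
Your proof is correct and is essentially the argument the paper intends: the paper gives no details beyond an appeal to elementary equivalence with a $p$-adic field, which is precisely your transfer argument for (1), while your direct computation in the $\Zz$-group together with the witnesses $a+p^{-m}$ is the routine verification of (2) that is left to the reader. Two harmless remarks: for $m<m'$ one has $v\bigl((a+p^{-m})-(a+p^{-m'})\bigr)=-m'e$ rather than $-me$ (the inequality $\le N$ you need still holds), and your reading of ``covered'' as equality with the finite union is the right one --- under mere inclusion the statement fails for $\gamma$ above $\Zz$ (such a ball sits inside a single $0$-ball), though only the case $\gamma$ below $\Zz$, which your argument handles using inclusion alone, is ever used in the paper.
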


The following is the key lemma for this part of the argument. We thank D. Macpherson for sketching the proof for us.
We first recall \cite[Lemma 2.3]{HasMac}:
\begin{fact}\label{HM} Let $K$ be a $p$-adically closed field and let $n\in \mathbb N$ with $n>1$ and $x,y,a\in K$. Suppose that $v(y-x)>2v(n)+v(y-a)$. Then $x-a, y-a$ are in the same coset of $P_n$.
\end{fact}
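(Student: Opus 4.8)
The plan is to convert the hypothesis into the statement that a certain element of $K^\times$ is close to $1$, and then apply Hensel's lemma; since $p$-adically closed fields are henselian, this is all that is needed.

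First I would handle the degenerate cases. Since $n\in\mathbb N$ we have $v(n)\geq 0$, so the hypothesis $v(y-x)>2v(n)+v(y-a)$ forces $v(y-a)<\infty$, i.e. $y\neq a$, and also $v(y-x)>v(y-a)$, whence by the ultrametric inequality $v(x-a)=v((x-y)+(y-a))=v(y-a)$, so $x\neq a$ as well. Thus $u:=(x-a)/(y-a)\in K^\times$ is well defined, $v(u)=0$, and it suffices to show $u\in P_n$, because then $x-a=u\,(y-a)$ puts $x-a$ and $y-a$ in the same coset of $P_n$.

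Writing $x-a=(y-a)+(x-y)$ gives $u=1+t$ with $t=(x-y)/(y-a)$ and $v(t)=v(x-y)-v(y-a)>2v(n)$. The heart of the argument is then the claim that $1+t$ is an $n$-th power whenever $v(t)>2v(n)$. I would prove this by applying Hensel's lemma to $f(Z)=Z^n-u\in\mathcal{O}[Z]$ at the point $Z=1$: we have $f(1)=1-u=-t$, so $v(f(1))=v(t)>2v(n)$, while $f'(1)=n$, so $v(f'(1))=v(n)$, and therefore $v(f(1))>2\,v(f'(1))$. The henselianity of $K$ then provides a root $z\in K$ of $f$ with $v(z-1)\geq v(f(1))-v(f'(1))>0$; in particular $z\neq 0$ and $z^n=u$, so $u\in P_n$, as wanted.

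There is no serious obstacle here: one just needs to keep track of the valuations and to have available the ``approximate root'' form of Hensel's lemma (the one with hypothesis $v(f(c))>2v(f'(c))$ rather than the residue-field version). That stronger form is genuinely needed precisely because $v(n)$ may be positive, i.e. when $n$ is divisible by the residue characteristic $p$; this is also why the hypothesis carries the term $2v(n)$ rather than just $v(y-a)$. The strictness in the hypothesis is exactly what yields the strict inequality $v(f(1))>2v(f'(1))$ required to run the argument.
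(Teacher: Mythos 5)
Your argument is correct: the reduction to showing $u=(x-a)/(y-a)=1+t$ with $v(t)>2v(n)$ lies in $P_n$, followed by the approximate-root (Newton/Rychlik) form of Hensel's lemma applied to $Z^n-u$ at $Z=1$, is exactly the standard proof, and your bookkeeping of the degenerate cases $y\neq a$, $x\neq a$ and of the valuations is accurate. The paper itself gives no proof of this statement — it is quoted as \cite[Lemma 2.3]{HasMac} — and your Hensel-based argument is the same route taken in that source, so there is nothing to add.
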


\begin{lemma}\label{L:dugald}
Assume that $X\subseteq K$ is a definable set, $\gamma_0\in \Gamma$ and $X$ contains infinitely many $\gamma_0$-balls.
Then for every $k\in \mathbb N$, $X$ contains a ball of radius $\gamma_0-k$. 

In particular, if $K$ is $\aleph_0$-saturated then $X$ contains a ball of radius $\gamma$ satisfying $\gamma<\gamma_0-k$ for all $k\in \mathbb{N}.$
\end{lemma}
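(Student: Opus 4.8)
The plan is to combine the cell decomposition for subsets of $K$ with Fact~\ref{HM}, which guarantees that cosets of the power predicates are ``solid'' (contain a whole ball) at a sufficiently fine scale. We may assume $k\ge 1$, since for $k=0$ the conclusion is immediate from the hypothesis. Fix pairwise disjoint $\gamma_0$-balls $B_1,B_2,\dots\subseteq X$ and write $X$ as a finite union of cells $\{x:\gamma_1<v(x)<\gamma_2\wedge P_{n}(\lambda x)\}$. Each $B_j$ meets at least one such cell, so by the pigeonhole principle there is a single cell $C=\{x:\gamma_1<v(x)<\gamma_2\wedge P_n(\lambda x)\}\subseteq X$ meeting infinitely many of the (pairwise disjoint) $B_j$. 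Note $\lambda\neq 0$ since $C\neq\emptyset$, and that $v(n)$ is a standard non-negative element of $\Gamma$, so that $k+2v(n)$ is a standard natural number.

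Next I would show that $C$ is \emph{solid at scale $2v(n)$}: for every $a\in C$ with $v(a)\neq\infty$, one has $B_{2v(n)+v(a)}(a)\subseteq C$. Indeed, if $v(x-a)>2v(n)+v(a)\ge v(a)$ then $v(x)=v(a)$, so $x$ again satisfies $\gamma_1<v(x)<\gamma_2$; and since $v(\lambda x-\lambda a)=v(\lambda)+v(x-a)>2v(n)+v(\lambda a)$, Fact~\ref{HM}, applied with its parameter ``$a$'' set to $0$, puts $\lambda x$ and $\lambda a$ in the same coset of $P_n$, whence $P_n(\lambda x)$ holds (for $n=1$ the power predicate is vacuous and the claim is immediate).

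The crux is to find $a\in C$ with $v(a)\le \gamma_0-k-2v(n)$: for such $a$ the previous step yields $B_{\gamma_0-k}(a)\subseteq B_{2v(n)+v(a)}(a)\subseteq C\subseteq X$, which is the desired ball. Suppose no such $a$ exists, i.e.\ every $a\in C$ has $v(a)>\gamma_0-k-2v(n)$; then $C\subseteq B^{\ast}:=B_{\gamma_0-k-2v(n)}(0)$, a ball of radius $\gamma_0-(k+2v(n))$. Each of the infinitely many $B_j$ meeting $C$ then meets $B^{\ast}$ and, being a $\gamma_0$-ball with $\gamma_0>\gamma_0-(k+2v(n))$ (here $k\ge 1$ is used), is contained in $B^{\ast}$. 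Thus $B^{\ast}$ contains infinitely many pairwise disjoint $\gamma_0$-balls, contradicting Lemma~\ref{L:ball in elem exten of p-adics}(1), since $k+2v(n)$ is a standard natural number. This establishes the displayed statement.

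For the ``in particular'' clause, assume $K$ is $\aleph_0$-saturated. The partial type in a single variable $\gamma$ consisting of $\gamma<\gamma_0-k$ for all $k\in\Nn$ together with the formula $\exists a\,\forall x\,(v(x-a)>\gamma\rightarrow x\in X)$ (over the finitely many parameters naming $X$ and $\gamma_0$) is finitely satisfiable by what was just proved --- a finite subset mentions only $k\le k_0$, and $\gamma:=\gamma_0-(k_0+1)$ satisfies it --- hence it is realized, and any realization gives the required ball. I expect the bookkeeping in the third step to be the delicate point: the whole argument hinges on the power-predicate index $n$ being a \emph{standard} natural number, which is precisely what makes $v(n)$, and hence the ``gap'' $k+2v(n)$ between the radii of $B^{\ast}$ and the $B_j$, standard --- so that fitting $C$ (and with it infinitely many $\gamma_0$-balls) into $B^{\ast}$ violates the finiteness in Lemma~\ref{L:ball in elem exten of p-adics}(1).
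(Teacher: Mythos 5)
Your proof is correct and runs on the same engine as the paper's: the cell decomposition of $X\subseteq K$, Fact \ref{HM} to show a cell is ball-solid at scale $2v(n)$ below the valuation of any of its points, and Lemma \ref{L:ball in elem exten of p-adics}(1) to force the cell to contain points of valuation $\le \gamma_0-k-2v(n)$. The only difference is organizational: where the paper finds one ball of radius $\gamma_0-1$, removes it and repeats to get infinitely many, and then iterates in $k$, you extract the radius-$(\gamma_0-k)$ ball in one shot from the quantitative valuation bound (and your pigeonhole on a cell merely \emph{meeting} infinitely many $\gamma_0$-balls neatly sidesteps the single-cell reduction) --- a harmless streamlining rather than a genuinely different route.
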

\begin{proof} We assume that $X$ contains infinitely many $\gamma_0$-balls. 

Partitioning $X$  into cells, and, if needed, translating by an element of $K$, we may assume that $X$ has the form
\[
\{x\in K: \gamma_1< v(x)<\gamma_2 \wedge P_n(\lambda\cdot x)\},
\]
where $\lambda\in K$, $\gamma_1<\gamma_2\in \Gamma\cup\{\infty,-\infty\}$ and $n\in\mathbb{N}$. Multiplying $X$  by a scalar,  we may assume that $\gamma_0=0$. 

By Lemma \ref{L:ball in elem exten of p-adics}, $X$ is not contained in a ball $B_{-m}(0)$ for any $m\in \Nn$. So for every $k\in \mathbb N$,  there is some $y_0\in X$  such that $v(y_0)<-k$. 
We can thus fix   $y_0\in X$, such that $v(y_0)+2v(n)<-1$.
We claim that $B_{-1}(y_0)\sub X$.

Indeed, assume that $v(x-y_0)>-1$. Then, since $v(y_0)<-1$, we have $v(x)=v(y_0)$ and therefore  $\gamma_1< v(x)=v(y_0)<\gamma_2$. Thus it is sufficient to see that $x$ and $y_0$ are in the same $P_n$-coset.

By our choice of $y_0$ we have $v(x-y_0)>-1>v(y_0)+2v(n)$, hence by Fact \ref{HM} (with $a=0$ there), $x$ and $y_0$ are in the same $P_n$-coset, so $x\in X$. Thus $B_{-1}(y_0)\sub X$.

We have thus shown that $X$ contains at least one ball of radius $-1$. After removing from $X$ a single ball of radius $-1$ it still contains infinitely many $0$-balls so we can find in $X$ a second ball of radius $-1$. Continuing in this manner we find infinitely many balls in $X$ of radius $-1$.

It follows that if $X$ contains infinitely many balls of radius $\gamma_0$ then it also contains infinitely many balls of radius $\gamma_0-1$. Repeating the process we obtain in $X$ infinitely many balls of radius $\gamma_0-k$, for every $k\in \mathbb N$.

If $K$ is $\aleph_0$-saturated then the existence of a ball of radius $\gamma$ with $\gamma<\gamma_0-k$ for all $k\in \mathbb{N}$ follows by saturation.
\end{proof}

\begin{lemma}\label{L:getting almost internal}
Let $S=\{S_t:t\in T\}$ be an infinite definable family of pairwise distinct 
finite sets  of $0$-balls. Assume that there exists an integer $n\in\mathbb{N}$ with $|S_t|\leq n$ for all $t\in T$.
Then there exists a definable infinite subset $T'\subseteq T$ and a definable finite-to-one $f$ from $T'$ into either $\Gamma$ or $K/\CO$.
\end{lemma}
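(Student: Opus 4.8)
The plan is to induct on the uniform bound $n$. For $n=1$ each $S_t$ is a single $0$-ball, so $t\mapsto S_t$ is a definable injection of $T$ into the set of $0$-balls; fixing $\lambda\in K$ with $v(\lambda)=1$ (possible since $\Gamma$ is a $\Zz$-group), multiplication by $\lambda^{-1}$ is a definable bijection from the set of $0$-balls onto $K/\CO$, and composing gives the desired finite-to-one (in fact injective) $f\colon T\to K/\CO$. For $n\ge 2$ I would first observe that if $\{t:|S_t|<n\}$ is infinite we are done by the induction hypothesis, so we may assume $|S_t|=n$ for every $t\in T$.

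Next I would attach to each $t$ the smallest ball $B_t$ containing $\bigcup S_t$; this exists by the usual ultrametric argument (as $|S_t|\ge 2$, it is the ball centred at any point of $\bigcup S_t$ of radius $r(t):=\min\{v(a-a')-1: A\ne A'\in S_t,\ a\in A,\ a'\in A'\}\le -1$), and $t\mapsto B_t$, $t\mapsto r(t)$ are definable. Since any single child of $B_t$ is a strictly smaller ball, $\bigcup S_t$ is not contained in one child, so $S_t$ meets at least two of the finitely many (by Lemma~\ref{L:ball in elem exten of p-adics}(1)) children of $B_t$.

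I would then run three dichotomies. (i) If $r$ is finite-to-one, take $f:=r$, whose image in $\Gamma$ is infinite since $T$ is. Otherwise fix $\gamma_0$ with $T_0:=r^{-1}(\gamma_0)$ infinite; on $T_0$ the definable map $b\colon t\mapsto B_t$ takes values among the $\gamma_0$-balls, and picking $c\in K$ with $v(c)=-1-\gamma_0$, multiplication by $c$ identifies the set of $\gamma_0$-balls definably with $K/\CO$. (ii) If $b$ is finite-to-one on $T_0$, take $f$ to be $b$ followed by this identification, landing in $K/\CO$. Otherwise fix a $\gamma_0$-ball $B$ with $T_1:=b^{-1}(B)$ infinite; as $B$ is now a parameter, name its finitely many children $C_1,\dots,C_m$. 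The profile $t\mapsto(|\{A\in S_t:A\subseteq C_i\}|)_{i\le m}$ takes finitely many values, so after shrinking $T_1$ to an infinite definable subset it is constant, say $(k_1,\dots,k_m)$; these sum to $n$ with at least two positive, so after relabelling we may assume $1\le k_1\le n-1$. (iii) Put $\psi(t):=\{A\in S_t:A\subseteq C_1\}$. If $\psi$ is finite-to-one on $T_1$, then $\{\psi(t):t\in T_1\}$ is an infinite definable family of pairwise distinct sets of $0$-balls of size $k_1<n$; the induction hypothesis yields an infinite definable subfamily $Y'$ and a finite-to-one $h$ from $Y'$ into $\Gamma$ or $K/\CO$, and then $\psi^{-1}(Y')\subseteq T_1$ together with $h\circ\psi$ works. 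If instead $\psi$ has an infinite fibre, fix $A_0$ with $T_2:=\psi^{-1}(A_0)$ infinite; on $T_2$ we have $S_t=A_0\sqcup(S_t\setminus A_0)$ with $A_0$ fixed, so $t\mapsto S_t\setminus A_0$ is injective, its image is an infinite definable family of pairwise distinct sets of $0$-balls of size $n-k_1<n$, and I would apply the induction hypothesis and pull back along this injection.

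The hard point, and the sole purpose of the three dichotomies, is to keep $f$ finite-to-one through all the reductions: whenever an auxiliary definable map ($r$, $b$ or $\psi$) is finite-to-one we recurse on its image and precompose, and whenever it is not we freeze the corresponding datum ($\gamma_0$, the ball $B$, or the subconfiguration $A_0$) and recurse on a strictly smaller remainder, so the induction on $n$ terminates. I would stress that no elimination of $\exists^{\infty}$ in imaginary sorts is needed, since ``every fibre finite'' and ``some fibre infinite'' are literally complementary; the only input from the valuation is that $\Gamma$ is a $\Zz$-group (for the uniformiser and for the identifications with $K/\CO$) together with the finiteness clause of Lemma~\ref{L:ball in elem exten of p-adics}.
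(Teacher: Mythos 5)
Your proof is correct, but it takes a genuinely different route from the paper's. The paper works globally with the invariant $d(t)$, the minimal mutual distance of the balls in $S_t$ (your $r(t)+1$): if $t\mapsto d(t)$ is not finite-to-one into $\Gamma$ it fixes a common value $d$ and then splits according to whether $d\in\Zz$ (all of $S_t$ lies in one ball $B_{d-2}(b)$, giving a map into $K/p^{d-2}\CO\cong K/\CO$ with uniformly bounded fibres) or $d<\Zz$, in which case it invokes Lemma \ref{L:dugald} (hence Fact \ref{HM} and $\aleph_0$-saturation) to locate one ball $B_0$ of radius $d+1$ meeting infinitely many $S_t$, and then either maps to $K/\CO$ or recurses on a smaller bound. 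You avoid the standard/nonstandard dichotomy entirely by freezing parameters early: once a single enveloping ball $B$ is fixed, Lemma \ref{L:ball in elem exten of p-adics}(1) gives finitely many children irrespective of whether its radius is standard, and your profile argument plus induction on $n$ (recursing either on the image of $\psi$ or on the family $\{S_t\setminus A_0\}$, where pairwise distinctness is preserved) replaces the paper's Case 1(b). What your route buys: no use of Lemma \ref{L:dugald}, no saturation, careful tracking of distinctness through the reductions, and a transparent termination argument; your observation that only complementary dichotomies (``all fibres finite'' versus ``some fibre infinite'') are needed, so that no elimination of $\exists^\infty$ in imaginary sorts is used, is exactly in the spirit of the paper's own caveat about imaginaries. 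What the paper's route buys: in the case $d\in\Zz$ it produces one explicit map with uniformly bounded fibres, and the invariant $d(t)$ yields the $\Gamma$-case without introducing the enveloping ball. One reading convention you rely on, harmlessly: when $\psi$ is finite-to-one you apply the induction hypothesis to the family indexed by the image of $\psi$ (equivalently $T_1$ modulo the kernel of $\psi$), so the lemma must be understood with an arbitrary interpretable index set; this is how it is applied in the paper as well, and nothing in your argument uses any special form of $T$, so the induction is self-consistent.
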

\begin{proof}
We may assume that $K$ is $\aleph_0$-saturated, by passing to a large enough elementary extension. 

By possibly passing to a definable subset of $T$, we may assume that $|S_t|=|S_{t^\prime}|=n$ for all $t,t^\prime\in T$, for some $n\in \mathbb{N}$. 

Given two $0$-balls, $b_1\neq b_2$ denote $$d(b_1,b_2)=\{v(x_1-x_2): \mbox{ for any } x_i\in b_i\},$$ and $d(b_1,b_1)=1$ (notice that $d(b_1,b_2)$ is independent of the choice of $x_i\in b_i$). And for $t\in T$, let $d(t)=\min\{d(b_1,b_2):b_i\in S_t\}$ and note that  $d(t)\leq 1$.



\vspace{.2cm}

\noindent{\bf Case 1} There exists $d\in \Gamma$ such that $\{t\in T:d(t)=d\}$ is infinite.

\vspace{.2cm}

Reducing $T$, we may assume that for every $t\in T$, $d(t)=d$.
\vspace{.2cm}

{\bf Case 1(a)} $d\in \mathbb{Z}$.

\vspace{.2cm}

Given a $0$-ball $b$, and $\gamma<0$ let $B_{\gamma}(b)$ be the unique ball of radius $\gamma$ containing $b$.
With this notation, for $t\in T$ and $b,b'\in S_t$ we get  $B_{d-2}(b)=B_{d-2}(b')$ (since $d(b_1,b_2)=d$).  In other words, for $t\in T$ the ball $B_t:=B_{d-2}(b)$ for some $b\in S_t$ is independent of the choice of $b\in S_t$.

Define $t_1\sim t_2$ if $B_{t_1}=B_{t_2}$.
Because $d-2\in \mathbb Z$, each $B_t$ contains only finitely many $0$-balls (Lemma \ref{L:ball in elem exten of p-adics}). In fact each $B_t$ contains at most $r:=p^{d-2}$-many $0$-balls. 
Thus each $\sim$-class contains at most $r$-many elements.
 Therefore, the map $t\to B_t$ from $T$ into $K/p^{d-2}\CO$ has fibres of cardinality at most $r$. Since $K/r\CO$ is definably isomorphic to $K/\CO$, we are done.
\vspace{.2cm}

{\bf Case 1(b)} $d+k<0$ for all $k\in \mathbb{N}$.

\vspace{.2cm}

For each $t\in T$, let $S'_t:=\{b\in S_t:(\exists b'\in S_t) \,  d(b,b')=d\}$. Replacing $S_t$ with $S'_t$ we may assume that every $b\in S_t$
has some $b'\in S_t$ such that $d(b,b')=d$.

Let $\mathcal{B}$ be the collection of all balls of radius $d+1$ in $K$. For any $B\in \mathcal{B}$ there is no $t\in T$ with $\bigcup S_t\subseteq B$, for otherwise we could find $b,b'\in S_t$ with $d(b,b')=d$ and $b,b'\subseteq B$, which is absurd.
Let $X=\bigcup_{t\in T} S_t$.  By assumption, $X$ is a union of infinitely many $0$-balls. By Lemma \ref{L:dugald} $X$ contains some ball $B_\gamma (x_0)$ with $\gamma+k<0$ for all $k\in \mathbb{N}$.  By Lemma \ref{L:ball in elem exten of p-adics}(2) $B_\gamma (x_0)$ intersects infinitely many of the $S_t$.

Consider the ball $B_0:=B_{d+1}(x_0)\in \mathcal{B}$. If $d+1\leq \gamma$ then $B_0\supseteq B_\gamma (x_0)$ and hence it clearly intersects infinitely many of the $S_t$. If $d+1>\gamma$ then $B_0\sub B_\gamma (x_0)$, thus (since $d<\mathbb Z$) $B_0$ contains infinitely many $0$-balls inside $B_{\gamma}(x_0)$. Because every such ball belongs to some $S_t$ it follows that $B_0$
intersects infinitely many of the $S_t$ as well.

In abuse of notation we write $B_0\cap S_t$ for the set of balls in $S_t$ which are contained in $B_0$.
 If $|B_0\cap S_t|\leq 1$ for all $t\in T$, then by passing to a definable subset of $T$ we may assume that $|B_0\cap S_t|=1$ for all $t\in T$ and as a conclusion we get a definable function (with finite fibers) from $T$ into $K/\CO$. Otherwise, by replacing $S$ with $\{S_t\cap B_0: t\in T\}$, we may finish the proof by induction on $K$,  since as we noted above,  $|S_t\cap B_0|<n$.

\vspace{.2cm}

\noindent{\bf Case 2} For every $d\in \Gamma$ the set $\{t\in T:d(t)=d\}$ is finite.

\vspace{.2cm}
In this case the map $t\mapsto d(t)$  a finite-to-one function into $\Gamma$.
\end{proof}

We are now in position to obtain the second goal of our strategy:

\begin{proposition}\label{Internal}
        Let $X=\{X_t:t\in T\}$ be a definable family of  infinite pairwise disjoint subsets of $K$. 
        Then, 
        
        \begin{enumerate}
        \item There exists a definable set $D\sub K$ such that for each $t\in T$, the set $D\cap X_t$ is the  union of finitely many balls of equal radius.
            
            \item There exists a definable infinite subfamily $T^\prime\subseteq T$ and a definable finite-to-one $f$ from $T^\prime$ to either $\Gamma$ or $K/\CO$.
                \end{enumerate}
                
\end{proposition}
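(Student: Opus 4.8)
The plan is to reduce Proposition \ref{Internal} to Lemma \ref{L:getting almost internal} by engineering, from the family $X = \{X_t : t\in T\}$ of pairwise disjoint infinite definable subsets of $K$, a definable family of finite sets of $0$-balls of uniformly bounded size, indexed by an infinite subfamily of $T$. Once that is done, part (2) is immediate from Lemma \ref{L:getting almost internal}, and part (1) will fall out of the construction: the definable set $D$ will be, up to rescaling, the union over $t$ of the balls making up the finite sets $S_t$, so that $D\cap X_t$ is by design a union of finitely many balls of equal radius.

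First I would use $\exists^\infty$-elimination and compactness to find, uniformly in $t$, a radius (possibly depending on $t$ in a definable way) at which each $X_t$ contains infinitely many balls: since each $X_t$ is infinite it has non-empty interior, so it contains a ball $B_{\gamma(t)}(x(t))$ for some definably chosen $\gamma(t)\in\Gamma$, $x(t)\in X_t$; then each $X_t$ contains infinitely many balls of radius $\gamma(t)+1$ inside that ball. Passing to a definable subfamily on which $\gamma(t)$ behaves uniformly — either $\gamma(t)$ is constant on an infinite subfamily, or $t\mapsto \gamma(t)$ has infinite image, in which case after reparametrising by $\Gamma$ I may rescale to normalise — I may arrange, after multiplying each $X_t$ by a suitable scalar (this is a definable operation, and here is where one must be a little careful to keep things uniform in $t$), that each $X_t$ contains infinitely many $0$-balls. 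Now for each $t$ I want to extract a \emph{finite} set $S_t$ of $0$-balls contained in $X_t$, of size bounded independently of $t$, such that the family $\{S_t\}$ is definable and the $S_t$ are pairwise distinct. The natural device: pick a definable linear (or at least definable, finite-to-one onto its image) way to order finitely many candidate balls — for instance, look inside a single fixed larger ball $B$ of integer radius $-m$; by Lemma \ref{L:ball in elem exten of p-adics}(1), $B$ contains only finitely many $0$-balls, say at most $r = p^{m}$ of them, and by Lemma \ref{L:dugald} applied to $X = \bigcup_t X_t$ (which is a union of infinitely many $0$-balls) one can find such a $B$ meeting $X_t$ in at least one $0$-ball for infinitely many $t$ — indeed by Lemma \ref{L:ball in elem exten of p-adics}(2), since $X$ contains a ball of very negative radius, a fixed $-m$-ball inside it meets infinitely many of the $X_t$. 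Set $S_t := \{ \text{$0$-balls of } X_t \text{ contained in } B\}$ for $t$ in the (infinite, definable) subfamily $T_0$ of those $t$ with $S_t\neq\emptyset$. Then $|S_t|\le r$ for all $t\in T_0$, the family $\{S_t : t\in T_0\}$ is definable, and — after possibly shrinking $T_0$ further so that $t\mapsto S_t$ is finite-to-one, using that the $X_t$ are pairwise disjoint so the $S_t$ are pairwise disjoint hence a fortiori pairwise distinct — Lemma \ref{L:getting almost internal} gives the desired $T'\subseteq T_0$ and finite-to-one $f : T' \to \Gamma$ or $K/\CO$. For part (1), take $D$ to be (the appropriate rescaling back of) $\bigcup_{t\in T} B^{(t)}$ where $B^{(t)}$ is the $-m$-ball chosen for the scaled copy of $X_t$; intersecting with $X_t$ recovers $S_t$, a union of finitely many balls of equal radius $0$ (equivalently, after unscaling, of equal radius depending on $t$), and on the complement of $T'$ one simply sets $D\cap X_t=\emptyset$, which vacuously satisfies the requirement.

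The main obstacle I expect is the \emph{uniformity of the rescaling step}: normalising so that each $X_t$ contains infinitely many $0$-balls requires choosing, definably in $t$, a scalar $\lambda(t)$ that moves $X_t$'s ``natural'' radius to $0$, and this is only possible after first cutting $T$ into a bounded number of definable pieces according to the cell structure of $X_t$ and the behaviour of $\gamma(t)$ — and one must check this can be done with the cell decomposition being uniform in the parameter $t$, which for $P$-minimal fields is available but needs to be invoked carefully (the cell decomposition is for definable subsets of $K$, applied fibrewise, with definability of the data in $t$). A secondary subtlety is ensuring, when one chooses the fixed large ball $B$, that it can be taken \emph{definable and working for infinitely many $t$ simultaneously}; this is exactly what the combination of Lemma \ref{L:dugald} (producing a ball of unboundedly negative radius inside $\bigcup_t X_t$) and Lemma \ref{L:ball in elem exten of p-adics}(2) (such a ball meets infinitely many pieces) is designed to deliver, but threading it through the disjointness hypothesis to keep $t\mapsto S_t$ finite-to-one takes a moment's bookkeeping. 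Modulo these uniformity checks, everything reduces cleanly to the two preceding lemmas.
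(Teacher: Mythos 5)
There is a genuine gap, and it is fatal to the central construction. After you normalise so that each $X_t$ contains infinitely many $0$-balls, you try to extract the finite sets $S_t$ by fixing a single ball $B$ of integer radius $-m$ and setting $S_t=\{0\text{-balls of }X_t\text{ contained in }B\}$, claiming that $S_t\neq\emptyset$ for infinitely many $t$. This cannot happen: by Lemma \ref{L:ball in elem exten of p-adics}(1) the ball $B$ contains only finitely many (at most $p^m$) $0$-balls in total, and since the $X_t$ are pairwise disjoint their $0$-balls are distinct, so at most $p^m$ of the $X_t$ can contribute a nonempty $S_t$. Your appeal to Lemma \ref{L:ball in elem exten of p-adics}(2) does not repair this: that lemma only says a ball of non-integer radius cannot be covered by finitely many integer-radius balls, and in the paper it is applied to a set which is literally a union of the $0$-balls in the $S_t$'s; it gives no statement of the form ``a fixed integer-radius ball inside a large ball meets infinitely many members of a disjoint family''. (A further problem: the $t$-dependent rescaling $\lambda(t)X_t$ destroys pairwise disjointness, which you later invoke to get the $S_t$ pairwise distinct; and your description of $D$ does not give part (1) for \emph{every} $t\in T$ — you cannot simply ``set $D\cap X_t=\emptyset$'' off the good subfamily, and the balls chosen for other indices may meet $X_t$ in sets that are not unions of balls.)

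The missing idea is that the finiteness of $S_t$ should come \emph{per index} from a minimality argument, not from intersecting with one fixed small ball. The paper first replaces each $X_t$ by a bounded piece with interior, so that the definable set of radii $\gamma$ with some $\gamma$-ball inside $X_t$ has a minimum $\gamma_t$; by Lemma \ref{L:dugald}, if $X_t$ contained infinitely many $\gamma_t$-balls it would contain a ball of strictly smaller radius, contradicting minimality, so $X_t$ contains only finitely many $\gamma_t$-balls, and their union $D_t$ gives (1) with $D=\bigcup_t D_t$. Then one looks at $t\mapsto\gamma_t$: if it is finite-to-one you already have the map to $\Gamma$; otherwise infinitely many $t$ share the same $\gamma_t$, and after rescaling that common radius to $0$ the family $\{D_t\}$ satisfies exactly the hypotheses of Lemma \ref{L:getting almost internal}. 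I would suggest reworking your argument along these lines; the reduction to Lemma \ref{L:getting almost internal} is the right target, but the route through a single fixed ball of integer radius cannot produce an infinite subfamily.
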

\begin{proof}
    Recall that a definable set $D\subseteq K$ is bounded if it is contained in some ball $B_\gamma(0)$.  We first reduce to the case where all the $X_t$ are bounded, by replacing each $X_t$ with the set
     $B_{\gamma(t)}(0)\cap X_t,$ for $\gamma(t)\in \Gamma$ maximal satisfying $\mathrm{Int}(B_{\gamma}(0)\cap X_t)\neq \emptyset$ (since each $X_t$ is infinite and $K$ has no definable infinite discrete sets, each $t$ has some $\gamma\in \Gamma$ such that $B_\gamma(0)\cap X_t$ is infinite, so has non empty interior). The family $\{B_{\gamma(t)}(0)\cap X_t:t\in T\}$ is definable, so we may assume that each $X_t$ is bounded.

    Consequently, for any $t\in T$ the set $\{\gamma\in \Gamma: (\exists a\in X_t)(B_\gamma(a)\subseteq X_t)\}$ has a minimum. Let $\gamma_t\in \Gamma$ be this minimum. By minimality of $\gamma_t$ and Lemma \ref{L:dugald}, for any $t\in T$ $X_t$ contains only finitely many balls of radius $\gamma_t$. We may now choose, uniformly in $t$, the set $D_t\sub X_t$, consisting of the union of those balls of radius $\gamma_t$ which are contained in $X_t$. Let $D=\bigcup_t D_t$. This ends (1).

    Consider the definable map $T\to \Gamma$ sending $t$ to $\gamma_t$. 
    
    If the  map is finite-to-one, we are done.
Otherwise, there exists $\gamma\in \Gamma$ with $\{t\in T: \gamma_t=\gamma\}$ infinite. By passing to a definable subfamily, we may assume that $\gamma_t=\gamma$ for all $t\in T$. After rescailing, we may assume that $\gamma=0$. We are now in the situation to apply Lemma \ref{L:getting almost internal}.
\end{proof}

\subsection{Interpretable fields}
We now return to our problem of an interpretable field $\CF$.
We start with the  following general lemma. It uses the well known coding of finite sets using symmetric functions:

\begin{fact}\label{F:EfI}
Let $(L,+,\cdot,\dots)$ be any field, possibly with  additional structure. Then  $L$ eliminates bounded finite imaginaries. I.e. if $\{X_t\subseteq L^n:t\in T\}$ is a definable family of finite sets uniformly bounded in size then there exists a definable map $f:T\to L^m$, for some integer $m$, satisfying that $f(t_1)=f(t_2)$ if and only if $X_{t_1}=X_{t_2}$.
\end{fact}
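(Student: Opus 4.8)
The plan is to carry out the classical symmetric-function coding of finite sets, uniformly in the parameter $t$. Let $N$ be the given uniform bound on $|X_t|$. First I would partition $T$ into the definable pieces $T_k=\{t\in T:|X_t|=k\}$ for $k=0,1,\dots,N$; it then suffices to define $f$ separately and definably on each $T_k$, landing in one fixed $L^m$, in such a way that the value of $f$ recovers both $k$ and $X_t$. Gluing over the finitely many pieces then yields the required definable $f:T\to L^m$.

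For the coding, to a finite set $X=\{\bar a_1,\dots,\bar a_k\}\subseteq L^n$, writing $\bar a_l=(a_{l,1},\dots,a_{l,n})$, I would attach the polynomial
\[
P_X(T,U_1,\dots,U_n)\;=\;\prod_{l=1}^{k}\Bigl(T-\sum_{j=1}^{n}a_{l,j}\,U_j\Bigr)\ \in\ L[T,U_1,\dots,U_n]
\]
in the $n+1$ indeterminates $T,U_1,\dots,U_n$ (for $n=1$ this is the familiar $\prod_{a\in X}(T-aU)$, essentially $\prod_{a\in X}(T-a)$). This product is monic of degree $k$ in $T$, and each of its coefficients is a polynomial expression in $\bar a_1,\dots,\bar a_k$ invariant under permuting the points; hence it depends only on the set $X$, and the relation ``$c$ is the coefficient of the monomial $T^iU_1^{d_1}\cdots U_n^{d_n}$ in $P_{X_t}$'' is expressed by a first-order formula in $t,c$ (quantify over an enumeration $\bar x_1,\dots,\bar x_k$ of $X_t$ and equate $c$ with the corresponding universal polynomial in the $\bar x_l$). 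Collecting all these coefficients — writing $P_X=\sum_{i=0}^{N}q_i(\bar U)\,T^i$ with each $q_i\in L[\bar U]$ of degree $\le N$ — produces a point of $L^m$ for a fixed $m$ depending only on $N$ and $n$ (e.g. $m=(N+1)\binom{N+n}{n}$), regardless of $k$; this is $f(t)$ on $T_k$.

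Finally I would check that $f(t_1)=f(t_2)$ forces $X_{t_1}=X_{t_2}$, i.e. that the code recovers $X$. From the coefficient tuple one reconstructs $P_X$; since $P_X$ is monic of degree $k$ in $T$, the integer $k=|X|$ is recovered as the largest $i$ with $q_i\neq 0$; and then
\[
X=\bigl\{\bar a\in L^n:\ P_X\bigl(\textstyle\sum_{j}a_jU_j,\,U_1,\dots,U_n\bigr)=0\text{ as an element of }L[\bar U]\bigr\},
\]
because the substituted polynomial equals $\prod_{l=1}^{k}\sum_{j}(a_j-a_{l,j})U_j$, which vanishes in the domain $L[\bar U]$ exactly when $\bar a$ coincides with one of the $\bar a_l$. (Alternatively, unique factorization in $L[T,\bar U]$ applied to the irreducible linear factors $T-\sum_j a_{l,j}U_j$ gives the same conclusion.)

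I do not expect a genuine obstacle: this is the ``well-known'' coding referred to in the statement. The only point needing a little care is the bookkeeping that makes the construction uniform — the code must live in a single $L^m$ simultaneously valid for all sizes $k\le N$ and must let one read $k$ back off definably — and using the $T$-degree of the monic polynomial $P_X$ (equivalently, zero-padding the high-$T$-degree coefficient slots) handles exactly this. Since the argument uses only that $L$ is a field, no separate treatment of finite $L$, or of positive characteristic, is required.
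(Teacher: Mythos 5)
Your argument is correct and is exactly the symmetric-function coding of finite sets that the paper invokes (it states the Fact without proof, citing this well-known coding): the coefficients of $\prod_l\bigl(T-\sum_j a_{l,j}U_j\bigr)$ give the code, and unique recovery of the set follows since $L[T,\bar U]$ is a domain. The bookkeeping you add (padding across sizes $k\le N$ and reading off $k$ from the monic $T$-degree) is precisely what makes the coding uniform, so nothing further is needed.
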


\begin{lemma}\label{1-dim}
    Let $\CF$ be a field interpretable in an $\aleph_0$-saturated structure $\CM$. Then there exists an infinite definable subset $S\subseteq \CF$ that is in definable bijection with $M/E$  for some definable equivalence relation $E$ on $M$.
\end{lemma}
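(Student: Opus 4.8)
The plan is to start from the interpretable field $\CF$, realised as a quotient $Y/\!\!\sim$ of a definable set $Y\subseteq M^k$ by a definable equivalence relation $\sim$, and to push the "dimension" down to $1$ by an induction on $k$. Fix a definable surjection $\pi\colon Y\to\CF$. First I would handle the base of the induction: if $k=1$ there is nothing to do, since $\CF$ itself is already of the form $M/E$ after restricting to a definable transversal-free piece (and $\CF$ is infinite, so at least one fibre structure is nontrivial). For the inductive step, the idea is to slice $Y$ along the last coordinate.

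The key step is the following. Write points of $M^k$ as $(x,y)$ with $x\in M^{k-1}$, $y\in M$. For $a\in M^{k-1}$ let $Y_a=\{y: (a,y)\in Y\}$, and consider the family of subsets $\pi(\{a\}\times Y_a)\subseteq\CF$. There are two cases. \emph{Case 1:} for some (hence, by saturation and compactness, for a definable set of) parameters $a$, the fibre $Y_a$ meets infinitely many $\sim$-classes, i.e.\ $\pi$ restricted to $\{a\}\times Y_a$ has infinite image. Then $S:=\pi(\{a\}\times Y_a)$ is an infinite definable subset of $\CF$, and the map $y\mapsto \pi(a,y)$ exhibits $S$ as a quotient of (a definable subset of) $M$, namely $M/E$ for the definable equivalence relation $E(y,y')\equiv\pi(a,y)=\pi(a,y')$ on $Y_a$; we are done. \emph{Case 2:} for every $a\in M^{k-1}$, the fibre $Y_a$ meets only finitely many $\sim$-classes. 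Using that $\CM$ eliminates $\exists^\infty$ (which holds here since $\CM$ is the ambient structure — in our applications a $P$-minimal field — and the relevant quantifier is over the home sort), pass to a definable $Y'\subseteq Y$ on which the number of $\sim$-classes met by each fibre is a fixed constant $N$. Now each $x\in M^{k-1}$ gives a finite subset $F_x:=\{\pi(x,y): y\in Y'_x\}\subseteq\CF$ of size $\le N$, uniformly bounded. Apply Fact~\ref{F:EfI} \emph{inside the field $\CF$}: there is a definable $g\colon M^{k-1}\to\CF^m$ with $g(x_1)=g(x_2)$ iff $F_{x_1}=F_{x_2}$. The image $Z:=g(M^{k-1})\subseteq\CF^m$ is definable, and (since $\CF$ is a field, hence has definable Skolem functions for finite sets of its own elements, or simply because we can pick the $i$-th coordinate) the union $\bigcup_{x}F_x=\pi(Y')$ is infinite — it is $\CF$ minus at most finitely much — so $Z$ is infinite. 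Moreover $Z$ is interpretable in $\CM$ as a quotient of the definable set $M^{k-1}$ (via $x\mapsto g(x)$, whose fibres are the classes of "$F_x=F_{x'}$"), i.e.\ $Z$ is $M^{k-1}/E'$ for a definable $E'$ on $M^{k-1}$. Finally, some coordinate projection $Z\to\CF$ has infinite image (otherwise $\bigcup_x F_x$ would be finite); composing, we obtain an infinite definable $S\subseteq\CF$ together with a definable surjection from a definable subset of $M^{k-1}$ onto $S$ — i.e.\ $S$ is of the form $M^{k-1}/E''$. We have strictly decreased the arity, and repeating brings us down to $k=1$, i.e.\ $S$ in definable bijection with $M/E$.

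The main obstacle I anticipate is bookkeeping around Case~2, specifically making sure the object we descend to is genuinely of the form $M^{k-1}/E''$ \emph{with the same $\CF$-structure targeted}, rather than merely an abstract interpretable set: one must keep a definable map onto an infinite piece of $\CF$ throughout the induction, which is why we compose with a coordinate projection at each stage and why we need to know that projection has infinite image (this is where "$\CF$ is a field" — or at least infinite with no definable infinite discrete subsets forcing cofiniteness isn't the point; rather, $\bigcup F_x$ being cofinite in $\CF$ is — enters). A secondary point requiring care is the legitimacy of applying elimination of $\exists^\infty$ to the quotient-valued count "number of $\sim$-classes met by $Y_x$"; this is a statement about a definable family of subsets of the \emph{home} sort $Y_x$, with the equivalence relation $\sim$ definable, so the relevant instance of $\exists^\infty$ is over $M$ and is available, but one should phrase it so as not to invoke $\exists^\infty$-elimination in imaginary sorts (which, as the paper notes, fails).
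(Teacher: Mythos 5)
Your argument is essentially the paper's: slice off the last coordinate, split according to whether some fibre meets infinitely many $\sim$-classes, and in the finite case code the fibrewise finite subsets of $\CF$ by Fact \ref{F:EfI} and compose with a coordinate projection having infinite image, thereby descending in arity (the paper phrases this as a minimal-arity counterexample rather than an induction, and finishes by enlarging $S$ by a point to replace a subset of $M$ by all of $M$). One correction: the uniform bound in your Case 2 should come from the $\aleph_0$-saturation hypothesis (as in the paper), not from elimination of $\exists^\infty$, which is not among the hypotheses for a general $\CM$; with that fix the step goes through, and the infinitude of $\bigcup_x F_x$ is guaranteed not by cofiniteness but by choosing, among the finitely many pieces on which the fibre count is constant, one whose image in $\CF$ is infinite.
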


\begin{proof}
    For any infinite definable subset $S\subseteq \CF$ there exist an integer $n$, definable subset $X\subseteq M^n$ and a definable equivalence relation $E$ such that $S$ is in definable bijection with $X/E$.

    Choose $S$ so that $n$ is minimal possible and let $X$ and $E$ be the corresponding definable set $X\subseteq K^n$ and definable equivalence relation, respectively. Assume, for simplicity, that $S=X/E$. We claim that $n=1$. Otherwise, let $\pi: X\to M^{n-1}$ be the projection onto the first $n-1$ coordinates. Let $W=\pi(X)$. If for some $w\in W$ the set $X_w/E$ is infinite, where $X_w:=X\cap \pi^{-1}(w)$ the set $X_w$ contradicts the minimality of $n$ (since we can definably identify $X_w$ with a subset of $M$).

So  $|X_w/E|$ is finite for all $w\in W$. By saturation, there exists a uniform bound $k$ on $|X_w/E|$ as $w$ ranges over $W$. Reducing $X$, we may assume that $|X_w/E|=r$ (for some $r\le k$) for all $w\in W$. This gives a definable correspondence $C:W\to \CF$ given by $w\mapsto X_w/E$. Using Fact \ref{F:EfI} we can replace $C$ with a function $c:W\to \CF^r$ whose image is infinite. So there is a projection $\tau: \CF^r\to \CF$ such that $\tau\circ c$ has infinite image in $\CF$. Define an equivalence relation $E'$ on $W$ by $E'(u,v)$ if $\tau\circ c(u)=\tau\circ c(v)$. This gives us a bijection between an infinite subset of $\CF$ and $W/E'$. Since $W\subseteq M^{n-1}$, this contradicts the minimality of $n$.

Hence $S$ is in definable bijection with $X/E$, where $X\subseteq M$ is a definable subset. By possibly enlarging $S$ by one element, we may assume that $X=M$.
\end{proof}

We now add our underlying assumption that $K$ is a $P$-minimal field.
We first note that neither $K/\CO$ nor $\Gamma$ ``eliminate $\exists^\infty$'' and this remains true for any infinite definable subset.

\begin{lemma}\label{L:no universal finiteness}
    Let $V$ be either $\Gamma$ or $K/\CO$ with the induced structure. Let $X$ be an infinite definable subset of $V$. Then there exists a formula $\varphi(x,y)$ satisfying that for every $n<\omega$ there exists $b_n$ such that $\varphi(V,b_n)\subseteq X$,  $|\varphi(V,b_n)|<\omega$ and $\sup_n |\varphi(V,b_n)|=\omega$. 
\end{lemma}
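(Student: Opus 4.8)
The plan is to handle the two possibilities for $V$ separately. For $V=K/\CO$ I would pull $X$ back to a definable subset of $K$ and use Lemma~\ref{L:dugald}; for $V=\Gamma$ I would use that the induced structure on $\Gamma$ is Presburger arithmetic.

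Suppose first that $V=K/\CO$, and let $\pi\colon K\to K/\CO$ be the quotient map. Since $\CO=B_{-1}(0)$, the fibres of $\pi$ are precisely the balls of radius $-1$; hence $Y:=\pi^{-1}(X)$ --- a definable subset of $K$, as $X$ is definable in the induced structure --- is a disjoint union of $|X|$ many balls of radius $-1$, and in particular contains infinitely many of them. By Lemma~\ref{L:dugald}, applied to $Y$ with $\gamma_0=-1$, for every $k\in\Nn$ there is $c_k\in K$ with $B_{-1-k}(c_k)\subseteq Y$; and by Lemma~\ref{L:ball in elem exten of p-adics}(1) (the residue field being $\mathbb{F}_p$) each such ball is the disjoint union of exactly $p^k$ balls of radius $-1$, so $\pi\big(B_{-1-k}(c_k)\big)$ is a finite subset of $X$ of size $p^k$. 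To make these uniformly definable I would take
\[
\varphi(x;u,\gamma)\ :\equiv\ \exists z\,\big(\pi(z)=x\ \wedge\ v(z-u)>\gamma\big),
\]
with $x$ of sort $K/\CO$, $u$ of sort $K$ and $\gamma$ of sort $\Gamma$, so that $\varphi(V;u,\gamma)=\pi\big(B_\gamma(u)\big)$; then with $b_n:=(c_n,-1-n)$ we get $\varphi(V;b_n)=\pi\big(B_{-1-n}(c_n)\big)\subseteq X$, $|\varphi(V;b_n)|=p^n<\omega$, and $\sup_n p^n=\omega$, as required.

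Now suppose $V=\Gamma$. Here I would invoke the well-known fact that the value group of a $P$-minimal field, with its full induced structure, is a pure model of Presburger arithmetic (it is a $\Zz$-group by definition); this is the only external input for this case. By quantifier elimination for Presburger, $X$ is a finite union of sets each of which is the intersection of a (possibly one-sided or degenerate) interval of $\Gamma$ with a congruence class modulo some $m\in\Nn$, $m>0$; since $X$ is infinite, one such piece $A$, of some fixed modulus $m$, is infinite. Then in all cases there are $a\in\Gamma$ and $\varepsilon\in\{+1,-1\}$ such that $\{a+\varepsilon km:0\le k\le N\}\subseteq A\subseteq X$ for every $N\in\Nn$ (going in whichever direction $A$ is unbounded, or, if $A$ is bounded and hence itself a progression of non-standard length, taking an arbitrarily long standard initial segment), and each such set is genuinely finite, of cardinality $N+1$. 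Taking $\varphi(x;u,w):\equiv\big((u\le x\le w)\vee(w\le x\le u)\big)\wedge m\mid(x-u)$ --- a formula of the induced structure on $\Gamma$ --- and $b_N:=(a,a+\varepsilon Nm)$, we get $\varphi(\Gamma;b_N)=\{a+\varepsilon km:0\le k\le N\}\subseteq X$, so $\sup_N|\varphi(\Gamma;b_N)|=\omega$.

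The one point I expect to need care is that the subsets of $X$ exhibited must be \emph{genuinely} finite, of standard cardinality tending to $\omega$, rather than merely non-standardly small. In the $K/\CO$ case this is exactly what Lemma~\ref{L:dugald} delivers: it keeps us among balls of radius $-1-n$ with $n\in\Nn$ standard, and for each such $n$ the statement ``a ball of radius $-1-n$ contains exactly $p^n$ cosets of $\CO$'' is a single first-order sentence, hence holds in $K$. Note also that the detour through $K$ is what makes the $K/\CO$ case go smoothly: one cannot in general extract the desired finite sets from the group structure of $K/\CO$ alone, since $X$ may lie inside the torsion-free part of $K/\CO$, disjoint from all of its standard finite subgroups. (If a formula strictly within the induced structure on $K/\CO$ is insisted upon, the $\Gamma$-parameter $-1-n$ can be traded for a parameter naming the finite subgroup $\pi(B_{-1-n}(0))$; but allowing a parameter from $\Gamma$ is harmless for the intended application.) For $V=\Gamma$, once the Presburger description of the induced structure is granted, the rest is routine.
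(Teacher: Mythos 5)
Your proof is correct and follows essentially the same route as the paper's: for $K/\CO$ you pull $X$ back to $K$, apply Lemma~\ref{L:dugald} to get arbitrarily small radii and push the resulting balls forward to get finite subsets of $X$ of unbounded size, and for $\Gamma$ you use Presburger quantifier elimination and longer and longer finite intervals, exactly as in the paper. The only slip is the parenthetical claim that the residue field is $\mathbb{F}_p$ and the count is exactly $p^k$: a $P$-minimal field can have residue field $\mathbb{F}_{p^f}$, so the number of $\CO$-cosets in a ball of radius $-1-k$ is $q^k$ with $q$ the residue field size, which is still finite and unbounded, so nothing in your argument is affected.
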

\begin{proof}
    As usual, there is no harm assuming that $K$ is $\aleph_0$-saturated. 
    Assume that $V=\Gamma$. Since $X$ is infinite, by quantifier elimination, we may assume without loss of generality that $X$ is of the form  $\{x\in \Gamma:\gamma_1<x<\gamma_2 \land P_m(x)\}$ for some $\gamma_1,\gamma_2\in \Gamma\cup\{\pm \infty\}$. Fix some $a\in X$. Since $X$ is infinite, at least one of $\gamma_1$ or $\gamma_2$ is not in the same archimedean component as $a$. So the set $\phi(x,n):=x\in X\cap (a-n,a+n)$ satisfies the requirements.

    Assume that $V=K/\CO$. Let $\pi: K\to K/\CO$ be the natural projection and $Y=\pi^{-1}(X)$. Since $X$ is infinite, $Y$ contains infinitely many balls of radius $0$. By Lemma \ref{L:dugald}, there exists $a\in Y$ such that $B_{-k}(a)\subseteq Y$ for all $k\in \mathbb{N}$. By Lemma \ref{L:ball in elem exten of p-adics}, for each $k\in \mathbb{N}$, $B_{-k}(a)$ contains only finitely many, say $n_k$, balls of radius $0$. In fact, $\sup_{k\in \mathbb{N}} n_k=\omega$ by Lemma \ref{L:ball in elem exten of p-adics}(2). By choosing elements $b_k\in K$ with $v(b_k)=-k$, the definable sets $\pi(B_{v(b_k)}(a))\subseteq X$ satisfy the requirements.
 \end{proof}

\begin{proposition}\label{nointernal}
    For $V=\Gamma$ or $V=K/\CO$, there is no infinite definable subset $I\subseteq \CF$ and finite-to-one definable function $f:I\to V$.
\end{proposition}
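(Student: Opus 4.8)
The plan is to argue by contradiction, exploiting the clash between the "uniform finiteness" properties of the interpretable field $\CF$ and the failure of $\exists^\infty$-elimination in $\Gamma$ and $K/\CO$ recorded in Lemma \ref{L:no universal finiteness}. So suppose $f\colon I\to V$ is a definable finite-to-one function with $I\subseteq\CF$ infinite and $V\in\{\Gamma,K/\CO\}$. Replacing $I$ by a definable subset, we may assume $f$ is $r$-to-one for a fixed $r<\omega$, and then, coding the fibres by a symmetric-function trick as in Fact \ref{F:EfI} (recall $\CF$ is a field, so it eliminates bounded finite imaginaries), we may replace $I$ by an infinite definable subset of $\CF^r$ and thereby assume $f$ is injective. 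Thus $X:=f(I)$ is an infinite definable subset of $V$ which is in definable bijection with the subset $I$ of (a power of) the field $\CF$.

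Now I would push the bad configuration of Lemma \ref{L:no universal finiteness} through $f^{-1}$. That lemma gives a formula $\varphi(x,y)$ and parameters $b_n$ with $\varphi(V,b_n)\subseteq X$, each $\varphi(V,b_n)$ finite, and $\sup_n|\varphi(V,b_n)|=\omega$. Pulling back along the definable bijection $f$, the sets $I_n:=f^{-1}(\varphi(V,b_n))\subseteq I$ form a definable family of finite subsets of $\CF$ (or $\CF^r$) with $|I_n|=|\varphi(V,b_n)|$, so they are finite but of unbounded size. But $\CF$, being an infinite field interpretable in the $P$-minimal (hence dp-minimal) field $K$, is itself a dp-finite structure in which definable sets behave tamely; in particular every field is strongly minimal-like enough that a definable family of finite subsets of $\CF$ of unbounded finite size cannot exist — concretely, $\CF$ eliminates $\exists^\infty$ on its home sort (a field interpretable in a dp-minimal, or even just in an NIP/rosy, structure does), which is exactly the negation of the existence of the $I_n$. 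This is the contradiction.

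The step I expect to be the real obstacle is justifying that $\CF$ itself eliminates $\exists^\infty$, i.e. that there is a uniform finite bound on the sizes of a definable family of finite subsets of $\CF$. One cannot simply invoke $\exists^\infty$-elimination of $K$, because the earlier discussion in the excerpt explicitly warns that this property does \emph{not} transfer to imaginary sorts, and $\CF$ is only interpretable, not definable, in $K$. The clean way around this is to use that $\CF$ is an \emph{infinite field}: by the additivity of dp-rank and the coincidence of dp-rank with $\acl$-dimension in $K$, the interpretable field $\CF$ has finite dp-rank, and a field of finite dp-rank (indeed any infinite field interpretable in a dp-minimal structure) has the property that its own definable subsets are finite-or-cofinite in a suitable uniform sense — more robustly, one invokes that in any field $L$ a definable family of finite subsets of uniformly unbounded size would, by translating and taking differences, produce a definable family of finite subgroups or finite subsets accumulating at $0$, contradicting the fact (valid for any field with no infinite definable discrete subset in an ambient tame topology, or simply by dp-rank considerations) that $L$ eliminates $\exists^\infty$. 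I would spell this out by reducing, via Fact \ref{F:EfI}, to a single definable subset of $\CF\times\CF$ with unboundedly large finite vertical fibres and deriving a contradiction with additivity of dp-rank applied inside the $P$-minimal field $K$ to the preimage of this set under the interpretation.
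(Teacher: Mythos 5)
Your overall skeleton is the same as the paper's: assume such an $f:I\to V$ exists, note $f(I)\subseteq V$ is infinite, apply Lemma \ref{L:no universal finiteness} to it, and pull the resulting family of unboundedly large finite sets back through $f$ to get a definable family of finite subsets of (a power of) $\CF$ of unbounded size; the contradiction must then come from the fact that the field $\CF$ cannot carry such a family. (Your preliminary reduction making $f$ injective via Fact \ref{F:EfI} is harmless but unnecessary: the paper simply uses the formula $\psi(u,y):=(\exists x)(u\in I\wedge f(u)=x\wedge\varphi(x,y))$, whose instances $\psi(\CF,b_n)$ are finite of unbounded size because $f$ is finite-to-one.)

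The genuine gap is exactly the step you flag as ``the real obstacle,'' and your proposed justification of it does not work. You need: an infinite field of finite dp-rank (here $\CF$ with its induced structure) admits no definable family of finite sets of unbounded cardinality, i.e.\ it eliminates $\exists^\infty$. Your argument for this is circular in one place (you derive a contradiction with ``the fact \dots that $L$ eliminates $\exists^\infty$,'' which is precisely what is to be proved) and insufficient in the other: neither finiteness of dp-rank nor additivity of dp-rank ``applied inside $K$'' can do the job by themselves, since $\Gamma$ and $K/\CO$ are themselves dp-minimal sorts of the same finite dp-rank ambient structure, dp-rank is just as additive there, and yet Lemma \ref{L:no universal finiteness} shows they \emph{do} carry such families -- so any correct argument must use the field structure of $\CF$ in an essential, nontrivial way. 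Likewise, passing from a family of large finite subsets to ``finite subgroups accumulating at $0$'' is not a legitimate move. The paper closes this step by quoting an external theorem, \cite[Corollary 2.2]{DoGoStrong}, which asserts exactly that fields of finite dp-rank (more generally, in strong theories) have this uniform finiteness property; without citing that result or reproving it, your proof is incomplete at its crucial point. Your parenthetical claim that any infinite field interpretable in an NIP (or rosy) structure eliminates $\exists^\infty$ is also an overreach and is not needed: finite dp-rank is what the paper uses.
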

\begin{proof}
    Assume towards a contradiction that such $I\subseteq \CF$ and $f:I\to V$ existed. Since $I$ is infinite and $f$ is finite-to-one, $f(I)\subseteq V$ is a definable infinite subset. Let $\varphi(x,y)$ and $\{b_n:n\in \mathbb{N}\}$ be as provided by Lemma \ref{L:no universal finiteness}. Let $\psi(u,y)$ be the formula $(\exists x)(u\in I\wedge f(u)=x\wedge \varphi(x,y))$. Since $f$ is finite-to-one, for every $n\in \mathbb{N}$, $n<|\psi(\mathcal{F},b_n)|<\omega$. On the other hand, $\mathcal{F}$, with its induced structure, is a field of finite $dp$-rank, and thus we get a contradiction to \cite[Corollary 2.2]{DoGoStrong}.
\end{proof}

We are ready to show that no interpretable field contains an infinite definable subset almost internal to either $\Gamma$ or $K/\CO$:

\begin{corollary}\label{C:existence of I}
    There exists a definable dp-minimal $I\subseteq \CF$, and a definable injection $I\hookrightarrow K$.
\end{corollary}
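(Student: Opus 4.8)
The plan is to assemble the pieces already in place. First I would invoke Lemma \ref{1-dim} to obtain an infinite definable $S\subseteq\CF$ in definable bijection with $K/E$ for some definable equivalence relation $E$ on $K$ (working, as always, in an $\aleph_0$-saturated model, which is harmless). The classes of $E$ split into two kinds: those that are finite and those that are infinite. If infinitely many classes are infinite, they form, after passing to a definable subfamily, a definable family $\{X_t : t\in T\}$ of infinite pairwise disjoint subsets of $K$ indexed by an infinite $T$ that injects definably into $S$ (hence into $\CF$); if instead only finitely many classes are infinite, we may discard them and assume all classes of $E$ on the relevant infinite subset are finite and, by saturation and $\exists^\infty$-elimination in $K$, uniformly bounded in size.

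Next I would treat these two cases. In the case of a definable family $\{X_t:t\in T\}$ of infinite pairwise disjoint definable subsets of $K$, Proposition \ref{Internal}(2) directly yields a definable infinite $T'\subseteq T$ and a definable finite-to-one map from $T'$ into $\Gamma$ or into $K/\CO$; since $T'$ sits inside $\CF$, this contradicts Proposition \ref{nointernal}. Hence this case cannot occur: only finitely many $E$-classes are infinite, and after removing them we are in the situation where $K/E$ has infinitely many classes, all finite and of uniformly bounded size. Coding each class by the union of the (boundedly many) balls it is a union of — or more simply, shrinking each finite class around a point and applying Proposition \ref{Internal}(1) to normalise — we produce a definable family of finite sets of balls of bounded cardinality; applying Lemma \ref{L:getting almost internal} (possibly after the cell-decomposition normalisation that lets us assume the balls have radius $0$) gives a definable infinite $T'\subseteq K/E$ and a definable finite-to-one map into $\Gamma$ or $K/\CO$ — again contradicting Proposition \ref{nointernal} unless we have already landed inside $K$.

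So in every branch the finite-to-one map lands in $K$ rather than in $\Gamma$ or $K/\CO$: we obtain an infinite definable $I_0\subseteq\CF$ together with a definable finite-to-one $f:I_0\to K$. It remains to upgrade "finite-to-one" to "injective" and "infinite" to "dp-minimal". For injectivity, the fibres of $f$ are finite of bounded size (saturation plus $\exists^\infty$-elimination in $K$), so by Fact \ref{F:EfI} applied in the field $\CF$ we may replace $I_0$ by the definable set of fibres and $f$ by a genuine definable injection into $K^m$; composing with a coordinate projection on which the image is still infinite, and invoking Steinitz exchange / additivity of dp-rank in $K$, we may cut down to get a definable injection $I\hookrightarrow K$ with $I$ of dp-rank $1$, i.e. dp-minimal. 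The main obstacle is purely bookkeeping: making sure the reduction in Lemma \ref{1-dim} really does leave us with an $E$ having infinitely many classes after discarding the finitely many infinite ones, and that the normalisations (translation, rescaling so the relevant radius is $0$) needed to feed into Lemma \ref{L:getting almost internal} can be done definably and uniformly in $t$; both are routine given Lemma \ref{L:dugald} and the cell-decomposition for subsets of $K$.
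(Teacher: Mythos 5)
Your use of Lemma \ref{1-dim} and your handling of the infinite-classes case (Proposition \ref{Internal}(2) played against Proposition \ref{nointernal}) match the paper's argument. The gap is in the finite-classes case, which is precisely the branch that has to produce the injection into $K$. Lemma \ref{L:getting almost internal} is not the right tool there: its hypotheses concern a definable family of finite sets of $0$-balls (it is the engine inside Proposition \ref{Internal} for the infinite-classes situation), whereas the finite $E$-classes are finite sets of points, not unions of balls; and in any case its conclusion is always a finite-to-one map into $\Gamma$ or $K/\CO$, so it can never ``land inside $K$'' --- the clause ``contradicting Proposition \ref{nointernal} unless we have already landed inside $K$'' is not coherent. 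Consequently your intermediate object, a definable finite-to-one $f:I_0\to K$, is never actually constructed by any of the lemmas you invoke.

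The step you are missing is the paper's use of Fact \ref{F:EfI} \emph{in the field $K$}: after discarding the finitely many infinite classes and bounding the class size by uniform finiteness, the $E'$-classes are uniformly finite subsets of $K$, so coding each class by the symmetric functions of its elements gives a definable injection $K/E'\to K^n$, hence a definable injection $S\to K^n$. Your proposal instead applies Fact \ref{F:EfI} ``in the field $\CF$'' to the fibres of $f$; that codes finite subsets of $\CF$ by tuples of $\CF$, which cannot yield an injection into $K^m$ --- the direction is reversed. Finally, passing from the injection of the dp-rank-one set $S$ into $K^n$ to an injection of a definable infinite $I\subseteq S$ into $K$ is not achieved by ``a coordinate projection on which the image is still infinite'' (a projection with infinite image need not be injective on the set); the paper uses \cite[Proposition 4.6]{SimWal}, which provides a definable infinite $V\subseteq f(S)$ and a projection restricting to a bijection of $V$ with an open subset of $K$, and $I$ is then the preimage of $V$.
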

\begin{proof}
    Let $X\subseteq K^n$ be a definable set $E$ a definable equivalence relation on $X$ such that $X/E$ is the universe of $\CF$. By Lemma \ref{1-dim} there exists a definable infinite $S\sub \CF$ and an equivalence relation $E'$ on $K$ such that $K/E'$ is in definable bijection with $S$. 
    
    We first claim that only finitely many $E'$-classes could be infinite. Indeed,
    if $E'$ had infinitely many infinite classes then by restricting to those classes with nonempty interior,  we may assume that all classes are infinite. By applying Proposition \ref{Internal}, we obtain a definable injection of some infinite subset of $\CF$ into either $\Gamma$ or $K/\CO$,  contradicting Proposition \ref{nointernal}. 
    
    We may, therefore, assume that all $E'$-classes are finite, and by uniform finiteness, can restrict to the case where all classes are  of  size $m$ for some $m\in \mathbb N$.
    The definable quotient map $\pi:K\to K/E'$ gives rise to  a definable family $\{\pi^{-1}(e):e\in K/E'\}$ of distinct finite sets of cardinality $m$. Hence by Fact \ref{F:EfI}, there exists a definable injection $\widehat f:K/E'\to K^n$ for some integer $n$. Composing, we get a definable injection $f:S\to K^n$.

        Since $\dpr(S)=1$ the set $f(S)\sub K^n$ is one-dimensional and hence, by \cite[Proposition 4.6]{SimWal}, there exists a definable infinite $V\subseteq f(S)$ and a projection map $\pi:K^n\to K$ such that $\pi|V$ is a bijection with an open subset of $K$.
\end{proof}

\section{Subset of $\CF$ strongly internal to $K$}
Let $K$ be a $P$-minimal valued. The previous section was concluded with the proof that there exists an infinite $I\subseteq \mathcal{F}$ ``strongly internal'' to $K$, where we borrow the following terminology from \cite{HaPetRCVF}:
\begin{definition}
    A definable set $S\subseteq \CF$ is {\em strongly internal to $K$ over $A$} if there exists an $A$-definable injection $f:S\to K^n$ for some $n$. It is called {\em strongly internal to $K$} if it is strongly internal over some $A$.
\end{definition}

Since, by \cite[Theorem 0.3.$\oplus_0$]{Simdpr}, dp-rank is additive we may conclude:
\begin{remark}\label{R:additivity for dpr in strongly}
If $S$ is strongly internal to $K$ over $A$ then for any $a,b\in S$
\[\dpr(a,b/A)=\dpr(a/bA)+\dpr(b/A).\]
\end{remark}

In the present section we study subsets of $\CF$ strongly internal to $K$ of maximal dp-rank. Our aim is to show that such sets, at least on some generic subset, are not too far from being closed under the field operations. The proof is built on the analogous statement from \cite{HaPetRCVF}. Specifically, if $Y\sub \CF$ is of maximal dp-rank among all the subsets of $\CF$ strongly internal to $K$.  We show, Lemma \ref{L:generic linearity for K}, that the function $\la x,y,z\ra \mapsto xy-z$ maps a generic subset of $(I+b) \times Y^2 $  (some $b$) into $Y$.

\begin{lemma}\label{L:locally the same}
    Let $X\subseteq Z\subseteq K^n$ be $\emptyset$-definable sets with $\dpr(X)=\dpr(Z)$. For any $d\in X$ with $\dpr(d)=\dpr(X)$ there exists an open neighborhood $U\subseteq K^n$ such that  $U\cap X=U\cap Z$.
\end{lemma}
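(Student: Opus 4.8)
The statement says that if $X \subseteq Z \subseteq K^n$ are $\emptyset$-definable with equal dp-rank $k$, then near any point $d \in X$ realizing the generic rank $k$, the sets $X$ and $Z$ agree. The natural strategy is to argue by contradiction: suppose no open neighborhood $U$ of $d$ satisfies $U \cap X = U \cap Z$. Since $X \subseteq Z$, this means every open neighborhood of $d$ contains a point of $Z \setminus X$, i.e. $d$ lies in the topological closure of $Z \setminus X$. The plan is to transfer the hypothesis $\dpr(d/\emptyset) = k = \dpr(X)$ into a statement forcing $d \notin \overline{Z\setminus X}$, using the basic dimension theory for $P$-minimal fields recalled in Section 1 (dp-rank = topological dimension = $\acl$-dimension in the valued field sort, and additivity).

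First I would set $W = Z \setminus X$, a $\emptyset$-definable subset of $Z$, and observe $\dpr(W) \leq \dpr(Z) = k$. The point $d$ has $\dpr(d/\emptyset) = k$. If $d \in \overline{W}$, I want to derive a contradiction. The cleanest route is via the frontier/closure behaviour of definable sets in a visceral (dp-minimal, topological) structure: for a $\emptyset$-definable set $W \subseteq K^n$, one has $\dim(\overline{W} \setminus W) < \dim(W)$ when $W$ is nonempty — this is one of the standard consequences of the Simon–Walsberg framework cited in the preliminaries (frontier inequality). Hence $\dim(\overline{W}) = \dim(W) \leq k$, and moreover $\overline{W}$ is $\emptyset$-definable. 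So $\dpr(d/\emptyset) \leq \dim(\overline{W}) \leq k$ with equality throughout, which forces $\dim(\overline W) = k$ and $d$ to be a generic point of $\overline{W}$ over $\emptyset$. But then $d$ cannot lie in $\overline{W}\setminus W$ (which has strictly smaller dimension), so $d \in W = Z\setminus X$, contradicting $d \in X$.

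The remaining case to handle is where $W$ could already have dimension exactly $k$ but $d \notin \overline{W}$ is still violated — but the argument above already covers this: the contradiction is obtained purely from $d \in \overline{W}$ together with $\dpr(d) = k \geq \dim(\overline W)$. So once $\overline{W} = \overline{Z\setminus X}$ is shown to be a $\emptyset$-definable set of dimension $\le k$ containing $d$ as a point of full rank, genericity of $d$ in $\overline W$ plus the frontier inequality closes the argument. If $W = \emptyset$ then $Z\setminus X = \emptyset$, so $X = Z$ and any $U$ works trivially.

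The main obstacle is making sure the frontier inequality $\dim(\overline{W}\setminus W) < \dim(W)$ (for nonempty definable $W$) is genuinely available in the $P$-minimal setting at the level of generality needed — in particular that it applies to subsets of $K^n$ and not just $K$. This is exactly the kind of topological-dimension statement treated axiomatically in \cite{SimWal} (and \cite{DolGodViscerality}), so I would cite it from there; the only care needed is to check the dimension notions line up (topological dimension vs. dp-rank vs. $\acl$-dimension), which the preliminaries already record for the valued field sort. A minor point is that one should note $d$ generic in $\overline{W}$ over $\emptyset$ means $d \notin$ any $\emptyset$-definable subset of strictly smaller dimension, and $\overline{W}\setminus W$ is such a set — this uses that $\dim$ and $\dpr$ agree so that "$\dpr(d/\emptyset) = \dim(\overline W)$" is literally the statement of genericity.
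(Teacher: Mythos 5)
Your argument is correct and is essentially the paper's: the paper's proof is a one-line appeal to \cite[Corollary 4.4]{SimWal}, which says that the set of points of $X$ not in the relative interior of $X$ in $Z$ has dp-rank strictly less than $\dpr(X)$, so the generic point $d$ must lie in that relative interior; your route derives the same fact by applying the frontier inequality $\dpr(\overline{W}\setminus W)<\dpr(W)$ to the $\emptyset$-definable set $W=Z\setminus X$ and using that $d$, having full rank over $\emptyset$, cannot lie in a lower-rank $\emptyset$-definable set. The frontier inequality you flag as the only possible obstacle is indeed available in the Simon--Walsberg framework (and in $P$-minimal dimension theory), so the two proofs differ only in which form of that genericity fact is cited.
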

\begin{proof}
    The relative interior of $X$ in $Z$ is the set $D$ of $x\in X$ such that there exists $U\ni x$ an open subset of $K^n$ such that $U\cap Z\subseteq X$. By \cite[Corollary 4.4]{SimWal}, $\dpr(X\setminus D)<\dpr(X)$ and hence $d\in D$ so there is an open set $U\ni d$ such that $U\cap Z=U\cap X$.
\end{proof}

The following lemma plays an important role in our argument.
\begin{lemma}\label{L:generic smaller ball}
Let $V\subseteq K^n$ be an open set,   $x\in K^m$  any element and $A$  an arbitrary set of parameters. Then there exists $B\supseteq A$ and  a $B$-definable open subset $U\subseteq V$ such that  $\dpr(x/B)=\dpr(x/A)$.
Moreover, if $x\in V$ then we can find such  $U$ with  $x\in U$.
\end{lemma}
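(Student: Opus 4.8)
The plan is to realise $U$ as a small ``box'' sitting inside $V$ whose centre and scale are chosen generically over $Ax$, so that adjoining these parameters to $A$ does not lower $\dpr(x)$. We may assume $V\neq\emptyset$ (otherwise $U=\emptyset$ and $B=A$ work, and the ``moreover'' is vacuous) and, as usual, that $K$ is sufficiently saturated. Fix a set $D$ of parameters over which $V$ is defined (not necessarily related to $A$). For $(c,t)\in K^n\times K^n$ with all $t_i\neq 0$ put $U(c,t)=\prod_{i=1}^n(c_i+t_i\CO)$, a basic open subset of $K^n$ which is $\{c,t\}$-definable, and let $W=\{(c,t):U(c,t)\subseteq V\}\subseteq K^{2n}$, a $D$-definable set. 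The first step is to observe that $\dpr(W)=2n$: since $V$ is open it contains some box $\prod_i B_\gamma(a_i)$, and a routine ultrametric estimate shows that $(c,t)\in W$ whenever $t_i\neq 0$, $v(t_i)>\gamma$ and $v(c_i-a_i)>\gamma$ for all $i$; thus $W$ contains a nonempty open subset of $K^{2n}$, which has dp-rank $2n$ by \cite[Proposition 2.4]{SimWal}, while $\dpr(W)\leq 2n$ since $W\subseteq K^{2n}$.

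Next I would use saturation to pick $(c,t)\in W$ with $\dpr(c,t/DAx)=2n$, and set $B=A\cup\{c,t\}$ and $U=U(c,t)$. Then $U\subseteq V$ is open and $B$-definable, so it only remains to see $\dpr(x/B)=\dpr(x/A)$. Since $Ax\subseteq DAx$ we get $\dpr(c,t/Ax)\geq\dpr(c,t/DAx)=2n$, hence $\dpr(c,t/Ax)=2n$ (a $2n$-tuple from $K$ has dp-rank at most $2n$), and similarly $\dpr(c,t/A)=2n$. As $(c,t)$ and $x$ are tuples from powers of $K$, hence strongly internal to $K$ over $A$, dp-rank is additive on them over $A$ (Remark~\ref{R:additivity for dpr in strongly}, \cite{Simdpr}); computing $\dpr(c,t,x/A)$ with the two splittings $(c,t)\mid x$ and $x\mid(c,t)$ gives
\[
\dpr(c,t/Ax)+\dpr(x/A)=\dpr(c,t,x/A)=\dpr(x/B)+\dpr(c,t/A),
\]
and substituting $\dpr(c,t/Ax)=\dpr(c,t/A)=2n$ yields $\dpr(x/B)=\dpr(x/A)$.

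For the ``moreover'' suppose $x\in V$ (so necessarily $m=n$) and pick $\gamma_0\in\Gamma$ with $\prod_i B_{\gamma_0}(x_i)\subseteq V$. I would run the same argument with $W$ replaced by $W_x=\{(c,t):U(c,t)\subseteq V\ \text{and}\ x\in U(c,t)\}$, which is $Dx$-definable: whenever $t_i\neq 0$, $v(t_i)>\gamma_0$ and $v(c_i-x_i)\geq v(t_i)$ for all $i$, one has $c_i+t_i\CO=x_i+t_i\CO\ni x_i$ and $x_i+t_i\CO\subseteq B_{\gamma_0}(x_i)$, so $(c,t)\in W_x$; the set of such $(c,t)$ is a product of $n$ planar sets, each in definable bijection with $\CO\times(B_{\gamma_0}(0)\setminus\{0\})$ via $(s,t)\mapsto(x_i+ts,t)$ and hence of dp-rank $2$, so $\dpr(W_x)=2n$. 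Choosing $(c,t)\in W_x$ with $\dpr(c,t/DAx)=2n$ and repeating the computation above produces $U=U(c,t)$ with $x\in U\subseteq V$, $U$ being $B$-definable for $B=A\cup\{c,t\}$, and $\dpr(x/B)=\dpr(x/A)$.

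The delicate point is the dp-rank bookkeeping of the second paragraph: one must guarantee that adjoining the \emph{scaling} data $t$ to $A$ (and not merely the centre $c$) costs no dp-rank. This is precisely why the box is parametrised by the two \emph{real} tuples $c,t\in K^n$ rather than by a centre together with a radius in $\Gamma$ --- additivity of dp-rank is available for real tuples over $A$ (via strong internality to $K$), but there is no such additivity across the value group sort.
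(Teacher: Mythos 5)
Your proof is correct and follows essentially the same route as the paper's: in both, the sub-ball/box inside $V$ is parametrised by field elements (a centre and a scale $t_i$, resp. $r_1$, coding the radius so as to stay in the valued field sort and avoid $\Gamma$), these parameters are chosen generically over $Ax$, and the preservation of $\dpr(x/A)$ is extracted from exchange/additivity of dp-rank for real tuples. The only difference is bookkeeping: the paper argues coordinate-by-coordinate by induction on $n$ using exchange directly (and handles the ``moreover'' by centring at $x$), whereas you do it in one step via the full-rank parameter space $W$ (resp. $W_x$) and the additivity identity.
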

\begin{proof}
    Since $V$ is open, and is not assumed to be defined over $A$, we may assume that  $V=B_{v(g_1)}(a_1)\times \dots \times B_{v(g_{n})}(a_{n})$ for some $a_1,\ldots, a_n\in K$ and $g_1,\dots, g_{n}\in \Gamma$.

    We prove the result by induction on $n$. For $n=1$, we use compactness to find $r_1\in K$ with $v(r_1)>v(g_1)$ such that
    $\dpr(r_1/xA)=1$ and then find $e_1\in B_{v(r_1)}(a_1)$, such that $\dpr(e_1/r_1xA)=1$. We have $U:=B_{v(r_1)}(e_1)\sub B_{\gamma_1}(a_1)$ and by exchange, $\dpr(x/e_1r_1A)=\dpr(x/A)$, so $B=e_1r_1A$ satisfies the lemma.  If $x\in V$ then we may start with $a_1=x$, and then  $x\in B_{v(r_1)}(e_1)$.

    As for the general case, we first replace, by induction, $B_{v(g_1)}(a_1)\times \dots \times B_{v(g_{n-1})}(a_{n-1})$ by an open subset $U_1\sub K^{n-1}$ definable over $A_1\supseteq A$ such that $\dpr(x/A_1)=\dpr(x/A)$. Then we apply the case $n=1$ to the last coordinate with $A_1$ replacing $A$.
\end{proof}


We can now prove the main lemma of this section that is the main technical lemma of the paper:

\begin{lemma}\label{L:generic linearity for K}
Let $I\sub \CF$ be as provided by Corollary \ref{C:existence of I} and $Y\sub \CF$ strongly internal to $K$ of maximal dp-rank. Assume that both are defined over a parameter set $A\sub K$. 

Then, there exists a definable $J\subseteq I$ and a definable  $S\subseteq Y^2$ such that $\dpr(J)=1$ and  $\dpr(S)=2\dpr(Y)$, and there exists $b\in J$, such that for every $\la x,y,z\ra\in J\times S\sub J\times Y^2$, we have $(x-b)y+z\in Y$.

Furthermore, if $\la b,c,d\ra \in I\times Y^2$ are such that $\dpr(b,c,d/A)=2\dpr(Y)+1$ then we can choose $J$ and $S\ni \la c,d\ra$, definable over some set $B\ni b$ such that $\dpr(c,d/B)=2\dpr(Y)$.
	\end{lemma}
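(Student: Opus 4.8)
The plan is to adapt the analysis from \cite{HaPetRCVF} of generic linearity of one-dimensional quotients, exploiting that both $I$ and $Y$ are strongly internal to $K$ while living inside a field $\CF$. First I would fix an injection $g\colon Y\hookrightarrow K^m$ witnessing strong internality over $A$, and an injection $h\colon I\hookrightarrow K$ as provided by Corollary \ref{C:existence of I}; via these we may transport everything to subsets of cartesian powers of $K$ and work with the valuation topology there. The idea is to consider, for each fixed $x\in I$ near $b$, the ``multiplication-by-$(x-b)$ then translate'' map $\mu_{x}\colon \la c,d\ra\mapsto (x-b)c+d$ on $\CF^2$; this is a definable bijection of $\CF^2$ for $x\neq b$, and when $x=b$ it degenerates to $\la c,d\ra \mapsto d$. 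The key point is that $\mu_x$ moves the pair $\la c,d\ra$ only ``a little'' when $x$ is very close to $b$ in $I$ (measured via $h$), so the image of a generic point of $Y^2$ under $\mu_x$ should stay inside $Y$ up to a set of smaller dp-rank.

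Concretely, starting from $\la b,c,d\ra\in I\times Y^2$ with $\dpr(b,c,d/A)=2\dpr(Y)+1$, additivity of dp-rank (Remark \ref{R:additivity for dpr in strongly}, applied to the product of a strongly internal set with $I$) forces $\dpr(b/A)=1$, $\dpr(c,d/bA)=2\dpr(Y)$, and in particular $c$ and $d$ are each generic in $Y$ over the relevant parameters. Now consider the definable set $Z=\{\la x,y,w\ra \in I\times Y^2 : (x-b)y+w\in Y\}$. It contains, for $x=b$, the whole of $\{b\}\times Y^2$, hence $\dpr(Z)\ge \dpr(Z_b)=2\dpr(Y)$; and since $Z\subseteq I\times Y^2$ with $\dpr(I\times Y^2)=2\dpr(Y)+1$, we have $\dpr(Z)\in\{2\dpr(Y),2\dpr(Y)+1\}$. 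The goal is to show $\dpr(Z)=2\dpr(Y)+1$, i.e. that $Z$ is ``large'', which will let us find the required generic point with $x\neq b$; then fibering $Z$ over $I$ and throwing away the $x$'s whose fiber has dp-rank $<2\dpr(Y)$ (a set of dp-rank $<1$ by \cite[Corollary 4.4]{SimWal}, hence finite after eliminating $\exists^\infty$) gives an infinite, hence cofinite-in-an-interval, set $J$ of good $x$'s, and the uniform generic fiber gives $S$.

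The mechanism for largeness is continuity near $b$: transporting to $K$, the map $\la x,c,d\ra\mapsto (x-b)g(c)+g(d)$ (interpreting multiplication and addition of $\CF$ through $g$) is, after restricting via Lemma \ref{L:locally the same} to a neighborhood where $g(Y)$ agrees with its Zariski-like closure of equal dp-rank, a definable function which at $x=b$ equals $g(d)\in g(Y)$; by generic continuity of definable functions in $P$-minimal fields (this is where generic differentiability, or at least the Simon--Walsberg continuity results, enters) it maps a whole open box around $\la b, c, d\ra$ back into the set $g(Y)$ locally, which by Lemma \ref{L:locally the same} is $g(Y)$ itself near $g(d)$. Pulling back through $g$ and using Lemma \ref{L:generic smaller ball} to shrink the $x$-interval and the $\la c,d\ra$-neighborhood to parameter sets $B\ni b$ without dropping $\dpr(c,d/B)$ below $2\dpr(Y)$ yields the ``furthermore'' clause; the first, unparametrized statement then follows by specializing $\la b,c,d\ra$ to any realization of the appropriate dp-rank, which exists by saturation.

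The main obstacle I anticipate is the continuity/openness step: one must be careful that ``multiplication by $(x-b)$'' in the interpreted field $\CF$, read through the coordinate injection $g\colon Y\to K^m$, really is continuous in the $K$-topology at the relevant generic point. Unlike in a definable (as opposed to interpreted) field, the field operations of $\CF$ are a priori only definable maps on quotients, so their continuity with respect to the pulled-back topology on $g(Y)$ is not automatic; this is precisely where one invokes generic differentiability of definable functions together with the dp-rank/dimension coincidence, to guarantee that a definable function of maximal-dp-rank domain is continuous on a dp-rank-generic open set, and that the relevant restriction of $g(Y)$ behaves like an open subset of $K^{\dpr(Y)}$ via \cite[Proposition 4.6]{SimWal}. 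Handling the interaction between this generic open set and the point $\la c,d\ra$ we are forced to pass through — while keeping $\dpr(c,d/B)=2\dpr(Y)$ — is the delicate bookkeeping that Lemmas \ref{L:generic smaller ball} and \ref{L:locally the same} are designed to manage.
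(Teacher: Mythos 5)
Your bookkeeping around the generic triple $\la b,c,d\ra$ (additivity of dp-rank, the use of Lemma \ref{L:generic smaller ball} to pass to a $B$-definable neighbourhood while keeping $\dpr(c,d/B)=2\dpr(Y)$) matches the paper, but the mechanism you propose for producing the large set of triples with $(x-b)y+z\in Y$ --- generic continuity/differentiability of $\la x,y,z\ra\mapsto (x-b)y+z$ read through the coordinate injections --- has a genuine gap, which you yourself flag as the main obstacle. The map takes values in $\CF$, not in a cartesian power of $K$: at this point in the paper nothing is known about where $(x-b)c+d$ lands when $x\neq b$, so there is no coordinatization of the target and no $K$-topology in which your continuity statement can even be formulated, let alone deduced from generic differentiability (which applies only to definable maps between subsets of $K^n$). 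You cannot ``restrict to $g(Y)$ near $g(d)$'' before knowing that the relevant values lie in $Y$ --- which is precisely the assertion to be proved; and the continuity of the interpreted field operations on infinitesimal neighbourhoods is exactly what Sections 4--5 establish \emph{using} this lemma, so the route is circular. A telling symptom is that your argument never uses the maximality of $\dpr(Y)$ among subsets of $\CF$ strongly internal to $K$, whereas that hypothesis is the engine of the paper's proof.

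The paper's argument is rank-theoretic, with no topology on $\CF$. Set $e=bc-d$. The key claim is $b\notin\acl(c,e)$: if it failed then, since $d\in\dcl(b,c,e)$, one would get a $c$-definable $1$-to-$m$ correspondence $G$ from a $c$-definable set $D\ni e$ into $I\times Y$ with pairwise disjoint finite images and $\dpr(\mathrm{Im}(G))=\dpr(b,d/c)=\dpr(Y)+1$; composing with the injections $(g_1,g_2)$ into $K^{1+l}$ and coding the finite images by symmetric functions (Fact \ref{F:EfI}) yields a definable injection of $\dom(G)\sub\CF$ into some $K^N$, i.e.\ a strongly internal set of dp-rank exceeding $\dpr(Y)$, contradicting maximality. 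Given the claim, the fibre $f_c^{-1}(e)$ is infinite, its projection $J$ to the first coordinate is infinite, and by the very definition of $e$ every $x\in J$ satisfies $(x-b)c+d\in Y$. Then $g_1(J)$ has interior, Lemma \ref{L:generic smaller ball} provides the open $B$-definable $U\sub g_1(J)$ with $\dpr(c,d/Bg_1(b))=2\dpr(Y)$, one sets $J'=g_1^{-1}(U)$ and $S=\{\la y,z\ra\in Y^2:(\forall x\in J')\,(x-b)y+z\in Y\}$, which contains $\la c,d\ra$ and has dp-rank $2\dpr(Y)$ because $\dpr(c,d/Bb)=2\dpr(Y)$. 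This is the idea missing from your proposal; without it the continuity step cannot be repaired at this stage.
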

	\begin{proof} We fix $A$-definable injections $g_1:I\to K$ and $g_2:Y\to K^l$.
	For simplicity of notation assume $A=\emptyset$. By assumption $\dpr(I)=1$, and denote $\dpr(Y)=n$
	for some integer $n>0$ and fix some $\la b,c,d\ra \in I\times Y\times Y$ such that $\dpr(b,c,d)=2n+1$.
	Note that this implies that
	\[
	    2n+1=\dpr(b,c,d)\leq \dpr(b,d/c)+\dpr(c)\leq \dpr(b,d)+\dpr(c)\leq 2n+1
	\]
	and hence $\dpr(b,d/c)=n+1$, $\dpr(c)=n$. Similarly, $\dpr(b,c)=n+1$ and $\dpr(b/c)=1$. 
	
	For $\la x,y, z\ra \in I\times Y\times Y$ consider the function $f_y(x,z)=xy-z$. Let $e=f_c(b,d)$.

\begin{claim}
 $b\notin \acl(c,e)$.
\end{claim}
\begin{claimproof}
Assume towards a contradiction that there existed an algebraic formula $\varphi(x,e,c)$ isolating $\tp(b/c,e)$, in particular, $\varphi(x,e,c)$ would imply that $x\in I$. By the definition of $f_c$, $d\in \dcl(b,c,e)$, therefore  $\phi(x,e,c)\wedge f_c(x,y)=e$ is an algebraic formula isolating $\tp(b,d/c,e)$.  Hence, there is some integer $m$ with $\exists^{=m}(x,y)(\phi(x,e,c)\wedge f_c(x,y)=e)$. By compactness, there is a formula $\psi(z,c)\in \tp(e/c)$ that implies $\exists^{=m}(x,y)(\phi(x,z,c)\wedge f_c(x,y)=z)$. In other words, for $D:=\psi(\CF,c)$, there is a 1-to-$m$ definable partial $c$-definable correspondence $G$ from $D$ into $I\times Y$, sending $e$ to $(b,d)$.
Note that if $e_1\neq e_2\in D$ then $f_c^{-1}(e_1)\cap f_c^{-1}(e_2)=\0$, thus $G(e_1)\cap G(e_2)=\emptyset$.

The image of $G$ in $I\times Y$ is a $c$-definable set containing $\la b,d\ra$ and since $\dpr(b,d/c)=n+1$, we have 
 $\dpr(\mathrm{Im}(G))=n+1$.

Recall that $(g_1,g_2):I\times Y\to K\times K^l$ is a $\0$-definable injection, so $H=(g_1,g_2)\circ G:\mathrm{Dom}(G)\to K^{1+l}$  is a one-to-$m$ correspondence,
with disjoint images corresponding to distinct $e_1,e_2\in \mathrm{Dom}(G)$.  By Fact \ref{F:EfI}, $H$ induces a definable injection from $\mathrm{Dom}(G)$ into $K^N$, for some integer $N$, as $\dpr(\mathrm{Im}(G))>\dpr(Y)$ this is a contradiction to the choice of $Y$.
\end{claimproof}

We thus conclude that $b\notin \acl(c,f_c(b,d))=\acl(c,e)$ and in particular $f_c^{-1}(e)\subseteq I\times Y$ is infinite.
Notice that by the definition of $f_c$, for every $b'\in I$, and $d_1\neq d_2\in Y$, we have $f_c(b_1,e_1)\neq f_c(b_1,e_2)$,
thus the projection of $f_c^{-1}(e)$ on the first coordinate, call it $J$, is infinite.
By definition of $J$, for every $x\in J$ there is $z\in Y$ with $xc-z=bc-d=e$ and since $\mathcal{F}$ is a field the map from $J$ to $Y$ mapping $x\in J$ to $z=xc-e=(x-b)c+d$ is injective.

Because $g_1$ is injective, $g_1(J)$ is an infinite subset of $K$ and hence has non-empty interior.
By Lemma \ref{L:generic smaller ball}, we can find a $B$-definable open subset $U\subseteq g_1(J)$, such that 
\[
\dpr(g_2(c),g_2(d)/Bg_1(b))=\dpr(g_2(c),g_2(d)/g_1(b))=2n.
\]
Set $J^\prime =g_1^{-1}(U)$. It is an infinite $B$-definable subset of $J$, so for every $x$ in $J'$, we have $(x-b)c+d\in Y$. Let
\[
S=\{\la y,z\ra\in Y^2: (\forall x\in J')((x-b)y+z\in Y)\}.
\]
Note that $\la c,d\ra\in S$ and that $S$ is $Bb$-definable.  Because $(c,d,b)$ and  $(g_1(b),g_2(c),g_2(d))$ are inter-definable over $\0$
it follows that $\dpr(c,d/Bb)=2n$, so the proof is completed.
\end{proof}

\begin{remark}
Note that in Lemma \ref{L:generic linearity for K} we do not claim that the set  $I\times Y\times Y$ is mapped under $xy+z$, (or under $(x-b)y+z$) onto a subset of $\CF$ whose dp-rank equals $\dpr(Y)$ (this will turn out to be true once we complete the proof of the main theorem). Instead, at this stage, we  only found {\em a subset}
$J\times S\sub I\times Y\times Y$ of full dp-rank whose image has the same rank as $\dpr(Y)$.
\end{remark}

\section{infinitesimal neighbourhoods and topology}
In the present section we use methods similar to those in \cite{HaPetRCVF} in order to construct a type definable, ``infinitesimal'' subgroup of $(\CF,+)$ which is definably embedded  into $K^n$ for some $n$. The field $\CF$ itself will later be embedded into some $K^m$, using the subgroup of infinitesimals.\\

We assume in this section that $K$ is $P$-minimal. We will repeatedly use the fact (already mentioned in the intoduction) that in $P$-minimal $\acl$ satisfies the Steiniz Exchange prilnciple.
Throughout  $+$, $-$, $\cdot$ and $(\, )^{-1}$ denote the operations in  $\CF$.



\begin{definition}
For any $Z\subseteq \CF$ and a definable injective  $g:Z\to K^m$, let
$\tau_{Z,g}$ be the topology on $Z$ given by $\{g^{-1}(U): U\subseteq K^m \text{ open}\}$
\end{definition}

We observe that
because $K^n$ has a definable basis for its topology (given, say, by the family of open balls), each $\tau_{Z,g}$ has a definable basis as well. Also, if $Z_1,Z_2$ are both strongly internal to $K$ via definable injections $g_1$ and $g_2$, respectively,  then $\tau_{Z_1\times Z_2,g_1\times g_2}$ is equal to the topology generated by $\tau_{Z_1,g_1}\times \tau_{Z_2,g_2}$.

We will repeatedly use the following

\begin{lemma}\label{F:cont-points}
    Let $Z_1,Z_2, \subseteq \CF$ be strongly internal to $K$, witnessed by  $A$-definable injections $g:Z_1\to K^m$ and $h:Z_2\to K^n$. Let $f:Z_1\to Z_2$ be an $A$-definable function between the topological spaces $(Z_1,\tau_{Z_1,g})$ and $(Z_2,\tau_{Z_2,h})$.
    Then the set $C\sub Z_1$ of $\tau_{Z_1,g}$-continuous points of $f$ is $A$-definable, and $\dpr(Z_1\setminus C)<\dpr(Z_1)$.

\end{lemma}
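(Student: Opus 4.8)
The plan is to reduce the statement to a known dimension-theoretic fact about definable functions between subsets of $K^m$ and $K^n$, transported back along the injections $g$ and $h$. First I would observe that the assertion ``$x$ is a $\tau_{Z_1,g}$-continuous point of $f$'' can be phrased purely in terms of the definable topologies with definable bases: it says that for every basic open $W\ni h(f(x))$ in $K^n$ there is a basic open $U\ni g(x)$ in $K^m$ with $f(g^{-1}(U\cap g(Z_1)))\subseteq h^{-1}(W)$. Since both topologies have a $\emptyset$-definable (indeed $A$-definable) basis — the open balls in the ambient $K^m$, $K^n$ — this is a first-order condition on $x$ with parameters in $A$, so the set $C$ of continuity points is $A$-definable. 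That disposes of the first clause.

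For the rank estimate, set $X = g(Z_1)\subseteq K^m$, $X' = h(Z_2)\subseteq K^n$, and let $\tilde f = h\circ f\circ g^{-1}\colon X\to X'$. Because $g$ and $h$ are definable bijections onto their images, $\dpr(X)=\dpr(Z_1)$, and $x\in C$ if and only if $g(x)$ is a point of continuity of $\tilde f$ with respect to the subspace valuation topologies on $X$ and $X'$. So it suffices to show that the set of discontinuity points of a definable function $\tilde f\colon X\to X'$ between definable subsets of Cartesian powers of $K$ has dp-rank strictly less than $\dpr(X)$. This is exactly the kind of generic-continuity statement available in the Simon–Walsberg framework: by \cite[Corollary 4.4]{SimWal} (the same result invoked in the proof of Lemma \ref{L:locally the same} and Proposition \ref{Internal}), a definable set that is "small" — here the set of bad points — has smaller dp-rank, provided we know it has empty interior in $X$, or more directly one applies the generic continuity theorem for definable functions in topological (visceral / dp-minimal) structures. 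Concretely I would split $X$ by its dp-rank-$\dpr(X)$ part, work coordinate-wise on $X'\subseteq K^n$ (a function into $K^n$ is continuous iff each of its $n$ coordinate functions is), and reduce to the case of a definable function $X\to K$; then generic continuity of such functions in $P$-minimal (more generally topological dp-minimal) fields gives that the discontinuity locus is nowhere dense in the rank-maximal part of $X$, hence of strictly smaller dp-rank by \cite[Corollary 4.4]{SimWal}.

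The main obstacle I anticipate is making the reduction "continuity of $f$ at $x$ $\Leftrightarrow$ continuity of $\tilde f$ at $g(x)$" completely rigorous together with the coordinate-wise reduction on the target: one must be careful that $\tilde f$ has image in $X'\subseteq K^n$ rather than all of $K^n$, so "continuity into $X'$ with its subspace topology" must be matched with "continuity into $K^n$", which is automatic, and conversely the basic opens of $\tau_{Z_2,h}$ are traces of ambient balls, so no information is lost. Once that bookkeeping is done, the only genuine input is the generic continuity theorem for definable functions, which holds in $P$-minimal fields (it is part of the Simon–Walsberg package, and for $p$-adically closed fields is classical), combined with the additivity/monotonicity of dp-rank recalled in Section 1. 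I would therefore structure the write-up as: (1) first-order definability of $C$; (2) transport to $\tilde f$ on $X\subseteq K^m$; (3) coordinate reduction and citation of generic continuity; (4) conclude $\dpr(Z_1\setminus C) = \dpr(X\setminus \tilde C) < \dpr(X) = \dpr(Z_1)$ via \cite[Corollary 4.4]{SimWal}.
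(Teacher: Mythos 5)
Your proposal is correct and follows essentially the same route as the paper: definability of $C$ comes from the definable basis of the topologies, and the rank bound is obtained by transporting $f$ along $g,h$ to a definable function between subsets of $K^m$ and $K^n$ and invoking generic continuity of definable functions in $P$-minimal fields. The paper simply cites \cite[Theorem 5.1]{HasMac} for that last step instead of routing through the Simon--Walsberg framework and \cite[Corollary 4.4]{SimWal}, but the content is the same.
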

\begin{proof}The definability of $C$ follows from the definability of a basis for $\tau_{Z,g}$. The dimension statement follows from the analogous result for definable functions on subsets of $K^m$, in $P$-minimal fields, \cite[Theorem 5.1]{HasMac}.
\end{proof}

We thus have:
\begin{lemma}\label{unique topology}Let  $Z\sub \CF$ be definable and  $g:Z\to K^m$, $h:Z\to K^n$ two $A$-definable injections. Then,
$\tau_{Z,g}$ and $\tau_{Z,h}$ agree at every $z\in Z$ with $\dpr(z/A)=\dpr(Z)$.
Namely, there is a common basis for the $\tau_{Z,g}$-neighbourhoods and the $\tau_{Z,h}$-neighbourhoods of $z\in Z$.
\end{lemma}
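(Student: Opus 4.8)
The plan is to use Lemma~\ref{F:cont-points} to show that the identity map $Z\to Z$ is continuous, in both directions, between $(Z,\tau_{Z,g})$ and $(Z,\tau_{Z,h})$ at every point of maximal dp-rank, and that this forces the two topologies to have a common neighbourhood basis at such points. The key observation is that continuity at $z$ of $\id:(Z,\tau_{Z,g})\to(Z,\tau_{Z,h})$ means exactly that every $\tau_{Z,h}$-neighbourhood of $z$ contains a $\tau_{Z,g}$-neighbourhood of $z$; combined with the symmetric statement we get mutual refinement, i.e.\ a common basis.

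First I would consider $f=\id_Z$ viewed as an $A$-definable map from $(Z,\tau_{Z,g})$ to $(Z,\tau_{Z,h})$ and apply Lemma~\ref{F:cont-points}: the set $C_1$ of its $\tau_{Z,g}$-continuous points is $A$-definable with $\dpr(Z\setminus C_1)<\dpr(Z)$. Symmetrically, viewing $\id_Z$ as a map from $(Z,\tau_{Z,h})$ to $(Z,\tau_{Z,g})$, the set $C_2$ of $\tau_{Z,h}$-continuous points is $A$-definable with $\dpr(Z\setminus C_2)<\dpr(Z)$. Hence $\dpr\big(Z\setminus(C_1\cap C_2)\big)<\dpr(Z)$, so any $z\in Z$ with $\dpr(z/A)=\dpr(Z)$ cannot lie in $Z\setminus(C_1\cap C_2)$ (as that would force $\dpr(z/A)\le\dpr(Z\setminus(C_1\cap C_2))<\dpr(Z)$, using that over $A$ the dp-rank of a point is bounded by the dp-rank of any $A$-definable set containing it, which is the coincidence of dp-rank with $\acl$-dimension recorded in the preliminaries). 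Therefore every such $z$ lies in $C_1\cap C_2$, i.e.\ $\id_Z$ is continuous at $z$ in both directions.

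Next I would unwind what continuity at $z$ in both directions gives. Continuity of $\id:(Z,\tau_{Z,g})\to(Z,\tau_{Z,h})$ at $z$ says: for every $\tau_{Z,h}$-open $W\ni z$ there is a $\tau_{Z,g}$-open $U\ni z$ with $U=\id(U)\subseteq W$. Continuity the other way says: for every $\tau_{Z,g}$-open $U\ni z$ there is a $\tau_{Z,h}$-open $W\ni z$ with $W\subseteq U$. Intersecting a $\tau_{Z,g}$-neighbourhood with a $\tau_{Z,h}$-neighbourhood of $z$ and applying both refinements, one sees that the collection of sets that are simultaneously $\tau_{Z,g}$- and $\tau_{Z,h}$-open and contain $z$ is a neighbourhood basis for $z$ in each of the two topologies; concretely, one takes as common basis $\{\,U\cap W : U\in\mathcal B_g(z),\ W\in\mathcal B_h(z)\,\}$ where $\mathcal B_g(z),\mathcal B_h(z)$ are the definable bases at $z$, and checks via the two continuity statements that this family is cofinal in both $\tau_{Z,g}$- and $\tau_{Z,h}$-neighbourhoods of $z$. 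This is exactly the assertion of the lemma.

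I do not expect a serious obstacle here; the content is entirely in Lemma~\ref{F:cont-points} (hence ultimately in the Halevi--Macpherson continuity theorem \cite[Theorem 5.1]{HasMac}) and in the dp-rank/$\acl$-dimension coincidence, both already available. The only mild subtlety to be careful about is that $\id_Z$ really is an $A$-definable \emph{function} between the two topological spaces in the sense required by Lemma~\ref{F:cont-points} — it is, since $Z$, $g$, $h$ are all $A$-definable — and that one must invoke Lemma~\ref{F:cont-points} in both directions before intersecting the two exceptional sets; a single application would only give a one-sided refinement, not a common basis.
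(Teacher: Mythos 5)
Your argument is correct and matches the paper's proof, which is exactly the one-line application of Lemma \ref{F:cont-points} to $\id\colon Z\to Z$ (implicitly in both directions); you have simply spelled out the genericity argument placing $z$ in both sets of continuity points and the unwinding of mutual continuity into a common neighbourhood basis.
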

\begin{proof} Apply Lemma \ref{F:cont-points} to $\id:Z\to Z$.\end{proof}

\begin{definition} For $Z\sub \CF$ a definable set, $g:Z\to K^m$ a definable injection, and $d\in Z$,
 let $\nu_{Z,g}(d)$ be the partial type given by all definable $\tau_{Z,g}$-open sets containing $d$.
We call it the {\em infinitesimal neighborhood of $d$ with respect to $\tau_{Z,g}$}.
\end{definition}

\begin{remark}\label{R:product of nu}
If $Z_1, Z_2$ are both strongly internal to $K$ over $A$ via injections $g_1, g_2$, respectively, and $\la d_1,d_2\ra \in Z_1\times Z_2$ with $\dpr(d_1,d_2/A)=\dpr(Z_1)+\dpr(Z_2)$ then $\nu_{Z_1,g_1}(d_1)\times \nu_{Z_2,g_2}(d_2)=\nu_{Z_1\times Z_2,g_1\times g_2}(d_1,d_2)$ (see above discussion on $\tau_{Z_1,g_1}\times \tau_{Z_2,g_2}$).
\end{remark}

By Lemma \ref{unique topology}, we have:
\begin{corollary} If $Z\sub \CF$ is strongly internal to $K$ over $A$ and $d\in Z$ is such that $\dpr(d/A)=\dpr(Z)$ then the infinitesimal neighborhood of $d$ in $Z$ does not depend on any particular $A$-definable injection of $Z$ into some $K^m$. We denote it by $\nu_Z(d)$.
\end{corollary}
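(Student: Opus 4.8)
The plan is to deduce the statement directly from Lemma \ref{unique topology}. Fix two $A$-definable injections $g\colon Z\to K^m$ and $h\colon Z\to K^n$, and let $d\in Z$ satisfy $\dpr(d/A)=\dpr(Z)$. By definition, $\nu_{Z,g}(d)$ (respectively $\nu_{Z,h}(d)$) is the partial type consisting of all definable $\tau_{Z,g}$-open (respectively $\tau_{Z,h}$-open) subsets of $Z$ containing $d$. What must be shown is that these two partial types are equivalent, i.e.\ generate the same filter of definable sets, and hence have the same realisations in any elementary extension.

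This is now immediate: since $\dpr(d/A)=\dpr(Z)$, Lemma \ref{unique topology} provides a common basis for the $\tau_{Z,g}$-neighbourhoods and the $\tau_{Z,h}$-neighbourhoods of $d$. Concretely, for every definable $\tau_{Z,g}$-open $U\ni d$ there is a definable $\tau_{Z,h}$-open $V$ with $d\in V\subseteq U$, and symmetrically; here one uses that both $\tau_{Z,g}$ and $\tau_{Z,h}$ have definable bases, as noted right after the definition of $\tau_{Z,g}$ (because $K^n$ carries the definable basis of open balls), so that the basis elements witnessing the inclusions can be taken definable. Consequently the filter of definable open neighbourhoods of $d$ is the same whether computed in $\tau_{Z,g}$ or in $\tau_{Z,h}$, so $\nu_{Z,g}(d)$ and $\nu_{Z,h}(d)$ are equivalent partial types. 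We therefore write $\nu_Z(d)$ for this common infinitesimal neighbourhood.

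There is essentially no obstacle here; the entire content has already been absorbed into Lemma \ref{unique topology}, which in turn follows from Lemma \ref{F:cont-points} applied to $\id\colon Z\to Z$, resting ultimately on the generic continuity of definable functions on subsets of $K^m$ in $P$-minimal fields (\cite[Theorem 5.1]{HasMac}). The only subtlety worth flagging in the write-up is that the passage from ``agreement at $d$'' of the two topologies to ``equality of the two partial types'' requires the mediating neighbourhoods to be definable, which is exactly what the definable basis of open balls in $K^n$ guarantees.
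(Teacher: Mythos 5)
Your argument is exactly the paper's: the corollary is stated as an immediate consequence of Lemma \ref{unique topology}, which supplies a common (definable) basis of neighbourhoods of $d$ for $\tau_{Z,g}$ and $\tau_{Z,h}$, so the two partial types $\nu_{Z,g}(d)$ and $\nu_{Z,h}(d)$ coincide. Your additional remark about the definability of the mediating basis elements is a reasonable elaboration of what the paper leaves implicit, but the route is the same.
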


We also have:
\begin{lemma}\label{internal-locally the same}Assume that $Z\sub \CF$ is strongly internal to $K$ over $A$ as witnessed by $g$, and $Z_1\sub Z$ is $A$-definable with $\dpr(Z_1)=\dpr(Z)$.
If $d\in Z_1$ is such that $\dpr(d/A)=\dpr(Z)$ then $\nu_{Z_1}(d)=\nu_{Z}(d)$.
\end{lemma}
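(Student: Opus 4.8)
The statement to prove is Lemma~\ref{internal-locally the same}: if $Z\subseteq \CF$ is strongly internal to $K$ over $A$ via $g$, and $Z_1\subseteq Z$ is $A$-definable with $\dpr(Z_1)=\dpr(Z)$, then for $d\in Z_1$ with $\dpr(d/A)=\dpr(Z)$ we have $\nu_{Z_1}(d)=\nu_Z(d)$. The plan is to reduce this to the already-established fact (Lemma~\ref{L:locally the same}) that two definable subsets of $K^m$ of equal dp-rank agree on an open neighbourhood of any generic point of the smaller one, after transporting everything to $K^m$ via the injection $g$.

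\begin{proof}
By the preceding Corollary, $\nu_{Z_1}(d)$ may be computed using any $A$-definable injection of $Z_1$ into some Cartesian power of $K$; since $g|Z_1$ is such an injection, $\nu_{Z_1}(d)=\nu_{Z_1,g|Z_1}(d)$, and of course $\nu_Z(d)=\nu_{Z,g}(d)$. Let $X=g(Z_1)\subseteq g(Z)=:W\subseteq K^m$; these are $A$-definable sets with $\dpr(X)=\dpr(Z_1)=\dpr(Z)=\dpr(W)$. Since $g$ is an $A$-definable injection and $\dpr(d/A)=\dpr(Z)$, the point $c:=g(d)\in X$ satisfies $\dpr(c/A)=\dpr(X)$. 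By Lemma~\ref{L:locally the same} there is an open $U\subseteq K^m$ with $c\in U$ and $U\cap X=U\cap W$ (the cited lemma is stated over $\emptyset$, but the proof goes through verbatim over the parameter set $A$, using the relative-interior argument of \cite[Corollary~4.4]{SimWal}). Now $\nu_{Z,g}(d)$ is generated by the sets $g^{-1}(V)$ for $V\subseteq K^m$ open with $c\in V$, and $\nu_{Z_1,g|Z_1}(d)$ by the sets $g^{-1}(V)\cap Z_1=(g|Z_1)^{-1}(V)$ for the same $V$'s; shrinking each such $V$ to $V\cap U$, which is still an open neighbourhood of $c$, we get $g^{-1}(V\cap U)\cap Z=g^{-1}(V\cap U\cap W)=g^{-1}(V\cap U\cap X)=g^{-1}(V\cap U)\cap Z_1$. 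Hence the two partial types have a common cofinal basis, so $\nu_{Z_1}(d)=\nu_Z(d)$.
\end{proof}

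The only genuinely non-routine point is verifying that Lemma~\ref{L:locally the same} applies with parameters in $A$ rather than over $\emptyset$; this is immediate since \cite[Corollary~4.4]{SkimWal}'s bound on the non-interior part is insensitive to the base set, and a generic point over $A$ of the smaller set lies in the $A$-definable relative interior. Everything else is bookkeeping: translating infinitesimal neighbourhoods through the injection $g$ and intersecting basic open sets with the common neighbourhood $U$. I expect no real obstacle here; the lemma is a direct imaginary-side restatement of Lemma~\ref{L:locally the same}.
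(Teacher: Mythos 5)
Your proof is correct and takes essentially the same route as the paper: apply Lemma~\ref{L:locally the same} to $g(Z_1)\subseteq g(Z)\subseteq K^m$ at the generic point $g(d)$ and conclude that the topologies $\tau_{Z,g}$ and $\tau_{Z_1,g}$, hence the infinitesimal neighbourhoods, agree at $d$. The paper states this in one line; your version only adds the routine bookkeeping (transport via $g$, common basis of open sets, parameters over $A$).
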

\begin{proof} By Lemma \ref{L:locally the same}, the topologies $\tau_{Z,g}$ and $\tau_{Z_1,g}$ agree on a neighborhood of $d$, thus $\nu_Z(d)=\nu_{Z_1}(d)$.\end{proof}

\begin{lemma} \label{function}
Let $Y_1,Y_2\sub \CF$ be strongly internal to $K$ over $A$. If $f:Y_1\to Y_2$ is an $A$-definable partial function,
and $a\in \dom(f)$ with $\dpr(a/A)=\dpr(Y_1)$ then $f$ takes the partial type $\nu_{Y_1}(a)$ into $\nu_{Y_2}(f(a))$. If $f$ is injective then $f(\nu_{Y_1}(a))=\nu_{Y_2}(f(a))$.


    \end{lemma}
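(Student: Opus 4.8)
The plan is to derive everything from the generic continuity of definable functions packaged in Lemma \ref{F:cont-points}. Fix $A$-definable injections $g\colon Y_1\to K^m$ and $h\colon Y_2\to K^n$ witnessing strong internality, and compute all topologies and infinitesimal neighbourhoods with respect to these; by the corollary above (that $\nu_Z(d)$ is independent of the chosen injection at generic $d$) this agrees with the canonical choice at generic base points. Replacing $Y_1$ by $\dom(f)$ — which is $A$-definable, strongly internal to $K$ via $g|_{\dom(f)}$, and satisfies $\dpr(\dom(f))=\dpr(Y_1)$ since $a\in\dom(f)$ with $\dpr(a/A)=\dpr(Y_1)$ — and noting that $\nu_{\dom(f)}(a)=\nu_{Y_1}(a)$ by Lemma \ref{internal-locally the same}, I may assume $f$ is a \emph{total} $A$-definable function $Y_1\to Y_2$.

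The one substantive step is to show that $a$ is a $\tau_{Y_1,g}$-continuity point of $f$, regarded as a map from $(Y_1,\tau_{Y_1,g})$ to $(Y_2,\tau_{Y_2,h})$. By Lemma \ref{F:cont-points} the set $C\subseteq Y_1$ of such continuity points is $A$-definable and $\dpr(Y_1\setminus C)<\dpr(Y_1)$; since $\dpr(a/A)=\dpr(Y_1)$, the point $a$ cannot lie in $Y_1\setminus C$, so $a\in C$. This is the crux — it is exactly where genericity of $a$ and the differentiability input built into Lemma \ref{F:cont-points} (through \cite[Theorem 5.1]{HasMac}) are used — and the rest is an unwinding of definitions.

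Next I would translate continuity at $a$ into the assertion about types. By definition $\nu_{Y_1}(a)$ is generated by the $\tau_{Y_1,g}$-open neighbourhoods of $a$, and $\nu_{Y_2}(f(a))$ by the $\tau_{Y_2,h}$-open neighbourhoods of $f(a)$ (both families having definable bases coming from the bases of balls in $K^m$ and $K^n$). Continuity of $f$ at $a$ says that for every such $V\ni f(a)$ there is such a $U\ni a$ with $f(U)\subseteq V$. Hence if $x$ realises $\nu_{Y_1}(a)$ then $x\in U$ for every such $U$, so $f(x)\in V$ for every such $V$, i.e. $f(x)$ realises $\nu_{Y_2}(f(a))$. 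This proves the first assertion, $f(\nu_{Y_1}(a))\vdash\nu_{Y_2}(f(a))$.

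For the injective case I would run the same argument on the inverse. If $f$ is injective then $f^{-1}$ is an $A$-definable partial function $Y_2\to Y_1$ with domain $f(Y_1)$, and $\dpr(f(a)/A)=\dpr(a/A)=\dpr(Y_1)=\dpr(f(Y_1))$, so $f(a)$ is generic in $f(Y_1)$ over $A$; the first part applied to $f^{-1}$ at $f(a)$ yields $f^{-1}(\nu_{f(Y_1)}(f(a)))\vdash\nu_{Y_1}(a)$. In the applications one has $\dpr(Y_1)=\dpr(Y_2)$, so $\dpr(f(Y_1))=\dpr(Y_2)$ and, by Lemma \ref{L:locally the same}, $f(Y_1)$ contains a $\tau_{Y_2,h}$-neighbourhood of $f(a)$; hence $\nu_{f(Y_1)}(f(a))=\nu_{Y_2}(f(a))$ and every realisation of $\nu_{Y_2}(f(a))$ lies in $f(Y_1)$. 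Combining the two directions, $f$ restricts to a bijection from the realisations of $\nu_{Y_1}(a)$ onto those of $\nu_{Y_2}(f(a))$, i.e. $f(\nu_{Y_1}(a))=\nu_{Y_2}(f(a))$. I expect the continuity step — placing $a$ inside $C$ — to be the only real obstacle; identifying $\nu_{f(Y_1)}(f(a))$ with $\nu_{Y_2}(f(a))$ in the injective case is a minor bookkeeping point, automatic once $f(a)$ is generic in a full dp-rank $A$-definable subset of $Y_2$.
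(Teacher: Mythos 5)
Your argument is correct and is essentially the paper's own proof: generic continuity of $f$ at $a$ via Lemma \ref{F:cont-points} (possible since $\dpr(a/A)=\dpr(Y_1)$ forces $a$ into the continuity set), followed by unwinding the definition of the infinitesimal types through a definable basis. Your extra care in the injective case --- running the argument on $f^{-1}$ and invoking Lemma \ref{internal-locally the same} to identify $\nu_{f(Y_1)}(f(a))$ with $\nu_{Y_2}(f(a))$, which does require $\dpr(f(Y_1))=\dpr(Y_2)$ as indeed holds in the paper's applications --- addresses a point the paper leaves implicit in its ``it is now easy to conclude''.
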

    \begin{proof} By Lemma \ref{F:cont-points}, $f$ is continuous at $a$ with respect to $\tau_{Y_1,g}$ and $\tau_{Y_2,h}$ (for any $A$-definable $g,h$ witnessing the strong internality of $Y_1$, $Y_2$, respectively). It is now easy to conclude that $f$ maps $\nu_{Y_1}(a_1)$ into $\nu_{Y_2}(f(a))$.\end{proof}

The main result in this section is the following.
\begin{lemma} \label{addition closure} Assume that $Y_1,Y_2,Y_3\sub \CF$ are strongly internal to $K$ over $A$, with $\dpr(Y_1)=\dpr(Y_2)=\dpr(Y_3)=k$, and assume that $Y_1+Y_2\sub Y_3$.
Then
\begin{enumerate}
\item For every $c\in Y_1$ and $d\in Y_2$ such that $\dpr(c,d/A)=2k$, we have $\nu_{Y_1}(c)-c=\nu_{Y_2}(d)-d$, and for every $d'\in Y_2$ with $\dpr(d'/A)=k$ we have  $\nu_{Y_2}(d)-d=\nu_{Y_2}(d')-d'$.
    \item For every $d\in Y_2$ such that $\dpr(d/A)=k$, the partial type $\nu_{Y_2}(d)-d$ is a type definable subgroup of $(\CF,+)$
    \end{enumerate}
\end{lemma}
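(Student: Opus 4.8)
The plan is to prove (1) first and then deduce (2) as an easy consequence. For part (1), fix $c \in Y_1$, $d \in Y_2$ with $\dpr(c,d/A) = 2k$; by additivity of dp-rank (Remark \ref{R:additivity for dpr in strongly}) this means $\dpr(c/A) = \dpr(d/A) = k$ and $c,d$ are "independent". Consider the map $\sigma_d : Y_1 \to Y_3$ given by $x \mapsto x + d$. This is an $Ad$-definable injective partial function between sets strongly internal to $K$ (over $Ad$), and $\dpr(c/Ad) = k = \dpr(Y_1)$, so by Lemma \ref{function} we get $\sigma_d(\nu_{Y_1}(c)) = \nu_{Y_3}(c+d)$, i.e. $\nu_{Y_1}(c) + d = \nu_{Y_3}(c+d)$. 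Symmetrically, using the map $y \mapsto c + y$ on $Y_2$, which is $Ac$-definable and injective, and $\dpr(d/Ac) = k = \dpr(Y_2)$, we get $c + \nu_{Y_2}(d) = \nu_{Y_3}(c+d)$. Equating the two and subtracting $c+d$ yields $\nu_{Y_1}(c) - c = \nu_{Y_2}(d) - d$. A subtle point: to invoke Lemma \ref{function} we need $\nu_{Y_1}(c)$ computed over $Ad$ to agree with $\nu_{Y_1}(c)$ over $A$ — but this holds by Lemma \ref{unique topology} (or rather its corollary), since $\dpr(c/Ad) = \dpr(c/A) = k = \dpr(Y_1)$, the infinitesimal neighbourhood only depends on the point having full rank over the base. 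Similarly for $d$ over $Ac$.

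For the second assertion of (1), that $\nu_{Y_2}(d) - d = \nu_{Y_2}(d') - d'$ for any $d' \in Y_2$ with $\dpr(d'/A) = k$: I would pass to a large elementary extension and choose some $c \in Y_1$ with $\dpr(c/Ad) = k$ and also $\dpr(c/Ad') = k$ (possible since both $d$ and $d'$ are single tuples; e.g. take $c$ with $\dpr(c/Add') = k$). Then by the first part applied to the pair $(c,d)$ we get $\nu_{Y_1}(c) - c = \nu_{Y_2}(d) - d$, and applied to the pair $(c, d')$ we get $\nu_{Y_1}(c) - c = \nu_{Y_2}(d') - d'$; hence the two agree. (One should double-check that the first part applies verbatim when $\dpr(c/Ad) = k$ rather than literally $\dpr(c,d/A)=2k$ over the original $A$ — but replacing $A$ by a set over which things stay generic is harmless, or one argues directly with $\dpr(c,d/A) = 2k$ after choosing $c$ generically over $Ad d'$.)

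Part (2) then follows formally. Write $\mu := \nu_{Y_2}(d) - d$, a partial type over $Ad$ concentrating on $0 \in \CF$. It is closed under negation: $\nu_{Y_2}(d) - d = -(d - \nu_{Y_2}(d))$, and by the same argument as in (1) applied to subtraction — or more simply, note that by the first part $\mu = \nu_{Y_1}(c) - c$ as well, and symmetry of the hypothesis $Y_1 + Y_2 \subseteq Y_3$ in the two summands together with the freedom to swap $c,d$ forces $\mu = -\mu$. For closure under addition: take realizations $\varepsilon_1, \varepsilon_2 \models \mu$; we want $\varepsilon_1 + \varepsilon_2 \models \mu$. Choose $d' \in Y_2$ generic over everything in sight so that $d' + \varepsilon_1 \models \nu_{Y_2}(d')$ (using $\mu = \nu_{Y_2}(d') - d'$ from the second assertion of (1), and that $d'$ was chosen with $\dpr(d'/A\varepsilon_1 \ldots) = k$). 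Then $d' + \varepsilon_1 \in Y_2$ has full dp-rank over the relevant base, so $\nu_{Y_2}(d'+\varepsilon_1) = \mu + (d' + \varepsilon_1)$, and $(d' + \varepsilon_1) + \varepsilon_2 \models \nu_{Y_2}(d' + \varepsilon_1)$, i.e. $\varepsilon_1 + \varepsilon_2 \in \mu$ after translating back.

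**Main obstacle.** The delicate part is bookkeeping of parameters and genericity: each application of Lemma \ref{function} requires the relevant point to have dp-rank equal to the dp-rank of its ambient set \emph{over the base set used to define the translation map}, so one must keep choosing auxiliary elements generic over growing parameter sets and repeatedly invoke the base-independence of $\nu$ (the corollary after Lemma \ref{unique topology}, and Lemma \ref{internal-locally the same}) to move between $\nu_{Y_i}(\cdot)$ computed over different sets. The algebraic manipulations (additivity, symmetry) are routine; making sure the genericity hypotheses are legitimately satisfied at each step, working in a sufficiently saturated model, is where care is needed.
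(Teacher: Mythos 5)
Your part (1) is correct and is essentially the paper's own argument: the paper applies Lemma \ref{function} to the two-variable $A$-definable maps $(x,y)\mapsto x+y$ and $(x,z)\mapsto z-x$ at the generic pair $(c,d)$ (via Remark \ref{R:product of nu}), while you apply it to the one-variable translations $x\mapsto x+d$ and $y\mapsto c+y$ over the enlarged bases $Ad$, $Ac$; the genericity computations $\dpr(c/Ad)=\dpr(d/Ac)=k$ and the base-independence of $\nu$ make the two versions interchangeable, and your derivation of the second assertion of (1) (choosing $c$ generic over $Add'$) is exactly the paper's.

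The genuine gap is in part (2). For closure under negation, ``symmetry of $Y_1+Y_2\sub Y_3$ plus swapping $c,d$'' is not an argument: swapping only reproduces the identity $\nu_{Y_1}(c)-c=\nu_{Y_2}(d)-d$, and ``apply (1) to subtraction'' is unavailable because no hypothesis places $d-\nu_{Y_2}(d)$ (or any differences) inside a set strongly internal to $K$. For closure under addition, your $d'$ must be generic over $\varepsilon_1,\varepsilon_2$, hence lies outside the model over which the infinitesimal types are computed, and so a fortiori does $d'+\varepsilon_1$; the key step $\nu_{Y_2}(d'+\varepsilon_1)=\mu+(d'+\varepsilon_1)$ invokes part (1) at a nonstandard centre, where $\nu_{Y_2}(\cdot)$ no longer has the meaning used in (1). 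If one allows only ground-model-definable open sets, the filter at $d'+\varepsilon_1$ is not a neighbourhood filter (it contains no small balls around the point), and the continuity argument behind Lemma \ref{function}, hence the proof of (1), does not apply to it; if one allows parameters from the extension $\CK$, the resulting type is strictly finer than $\mu$, its realizations must be taken in a still larger model, and the asserted equality with $\mu+(d'+\varepsilon_1)$ compares types over different bases and fails in general. Note that already the preliminary claim $d'+\varepsilon_1\in Y_2$ rests on applying (1) at the nonstandard $d'$. So the problem is not mere bookkeeping of genericity. The paper's proof avoids taking infinitesimal neighbourhoods at any new point: from Lemma \ref{function} applied to $(x,y)\mapsto x+y$ and to the subtraction maps $Y_1\times Y_3\to Y_2$ and $Y_2\times Y_3\to Y_1$ at the single generic pair $(c,d)$, one gets that for every $d_2\models\nu_{Y_2}(d)$ the map $x\mapsto x+d_2$ is a bijection from $\nu_{Y_1}(c)$ onto $\nu_{Y_3}(c+d)$; then for $d_1,d_2\models\nu_{Y_2}(d)$, since $c+d_1\models\nu_{Y_3}(c+d)$ there is $c_1\models\nu_{Y_1}(c)$ with $c_1+d_2=c+d_1$, whence $d_1-d_2=c_1-c\in\nu_{Y_1}(c)-c=\nu_{Y_2}(d)-d$. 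This gives closure under difference, and hence the subgroup property (negation included), in one stroke; you should replace your treatment of (2) by an argument of this kind.
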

\begin{proof}
It is convenient to work in a $|K|^+$-saturated elementary extension $\CK$ of $K$ and work with the realizations in $\CK$ of the infinitesimal types. We first make some general observations.

 Consider the function $F:Y_1\times Y_2\to Y_3$, $F(y_1,y_2)=y_1+y_2$.
Applying Lemma \ref{function} to $F$ (and using Remark \ref{R:product of nu}), we see that $F(\nu_{Y_1}(c)\times \nu_{Y_2}(d))\sub \nu_{Y_3}(c+d)$.
Consider also the function $H:Y_1\times Y_3\to Y_2$, $H(y_1,y_3)=y_3-y_1$. By Lemma \ref{function} (and Remark \ref{R:product of nu}), it sends $\nu_{Y_1}(c)\times \nu_{Y_3}(c+d)$ into $\nu_{Y_2}(d)$.
It follows that for every $c_1\in \nu_{Y_1}(c)$, the function $y\mapsto c_1+y$ is a bijection between $\nu_{Y_2}(d)$ and $\nu_{Y_3}(c+d)$. Indeed, $F(c_1,-)$ is a function from $\nu_{Y_2}(d)$ into $\nu_{Y_3}(c+d)$, whose inverse is $H(c_1,-)$.

By the same observations we show that for every $d_1\in \nu_{Y_2}(d)$, the function $x\mapsto x+d_1$ is a bijection of $\nu_{Y_1}(c)$ and $\nu_{Y_3}(c+d)$.

\vspace{.2cm}

 In order to prove (1) we show that the sets of realizations in $\CK$ of the partial types $\nu_{Y_1}(c)-c$ and $\nu_{Y_2}(d)-d$ are equal.

 Let $c_1\in \nu_{Y_1}(c)$. By our above discussion, there is $d_1\in \nu_{Y_2}(d)$ such that $c_1+d_1=c+d$. Thus, $c_1-c\in \nu_{Y_2}(d)-d$, so $\nu_{Y_1}(c)-c\sub \nu_{Y_2}(d)-d$.
   The other inclusion is proved similarly, hence $\nu_{Y_1}(c)-d=\nu_{Y_2}(d)-d$.

Assume next that $d'\in Y_2$ is such that $\dpr(d'/A)=k$. We fix $c\in Y_1$ such that $\dpr(c/d,d'A)=k$.
It follows, by Remark \ref{R:additivity for dpr in strongly}, that $\dpr(c,d/A)=2k$ and $\dpr(c,d'/A)=2k$.
And then, by what we just saw,  $\nu_{Y_2}(d)-d=\nu_{Y_1}(c)-c=\nu_{Y_2}(d')-d'$.

\vspace{.2cm}

(2)
We claim that  $\nu_{Y_2}(d)-d$ is a subgroup of $(\CF,+)$. Given $d_1,d_2\in \nu_{Y_2}(d)$, we need to show that $(d_1-d)-(d_2-d)=d_1-d_2$ is also in $\nu_{Y_2}(d)-d$.

By our above observation, there is $c_1\in \nu_{Y_1}(c)$ such that $c+d_1=c_1+d_2$. It follows that $d_1-d_2=c_1-c\in \nu_{Y_1}(c)-c$.
By what we just showed, $\nu_{Y_1}(c)-c=\nu_{Y_2}(d)-d$, hence $d_1-d_2\in \nu_{Y_2}(d)-d$.\end{proof}

We are now ready to construct the maximal infinitesimal subgroup of $(\CF,+)$.
\begin{proposition}\label{P:infinit}
Let $Y\subseteq \CF$ be strongly internal to $K$ over $A$, of maximal dp-rank $n$.

\begin{enumerate}
    \item Let  $d\in Y$ be with $\dpr(d/A)=n$. Then $\nu_{Y}(d)-d$ is a type definable subgroup of $(\CF,+)$. Moreover, it is independent of the choice of $d$, and we denote it $\nu_{Y}$.

    \item For any strongly internal $Y_0\subset \CF$,  with $\dpr(Y_0)=n$, $\nu_Y=\nu_{Y_0}$.

\end{enumerate}
\end{proposition}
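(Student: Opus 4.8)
The plan is to derive both parts from Lemma~\ref{addition closure}, applied to the linear configuration supplied by Lemma~\ref{L:generic linearity for K}; fix throughout a set $I$ as in Corollary~\ref{C:existence of I}. For part~(1) I would start by fixing a generic triple $\langle b,c,d\rangle\in I\times Y\times Y$, i.e.\ with $\dpr(b,c,d/A)=2n+1$, and applying the ``furthermore'' clause of Lemma~\ref{L:generic linearity for K}. This yields a parameter set $B\ni b$ and $B$-definable $J\subseteq I$, $S\subseteq Y^2$ with $\dpr(J)=1$, $\dpr(S)=2n$, $\langle c,d\rangle\in S$, $\dpr(c,d/B)=2n$, and $(x-b)y+z\in Y$ for all $\langle x,y,z\rangle\in J\times S$. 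Pick $x_0\in J$ with $\dpr(x_0/Bcd)=1$; then $\lambda:=x_0-b$ is a nonzero element of $\CF$, and over $A':=\acl(Bx_0)$ the set $Y_1:=\lambda Y$ is strongly internal (compose scaling by $\lambda^{-1}$ with an $A$-definable injection $Y\hookrightarrow K^{m}$) with $\dpr(Y_1)=n$, while $S':=\{\langle\lambda y,z\rangle:\langle y,z\rangle\in S\}\subseteq Y_1\times Y$ is $A'$-definable with $\dpr(S')=2n$ and $u+z\in Y$ whenever $\langle u,z\rangle\in S'$.

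This is exactly the hypothesis of Lemma~\ref{addition closure} with $Y_2=Y_3=Y$, save that instead of the inclusion $Y_1+Y_2\subseteq Y_3$ we only have a full-dp-rank definable $S'\subseteq Y_1\times Y_2$ on which $\langle y_1,y_2\rangle\mapsto y_1+y_2$ takes values in $Y_3$. The proof of Lemma~\ref{addition closure} goes through verbatim in this weaker situation: it only refers to the infinitesimal neighbourhood of a point $p=(p_1,p_2)$ of $Y_1\times Y_2$ with $\dpr(p/A')=2n$ which we may take to lie in $S'$, and since $S'$ has full dp-rank there, Lemma~\ref{L:locally the same} (transported through the injections, as in Lemma~\ref{internal-locally the same}) gives $\nu_{S'}(p)=\nu_{Y_1\times Y_2}(p)=\nu_{Y_1}(p_1)\times\nu_{Y_2}(p_2)$ (Remark~\ref{R:product of nu}); hence the maps $F(y_1,y_2)=y_1+y_2$ and $H(y_1,y_3)=y_3-y_1$ may be used on these infinitesimal boxes exactly as before. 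We conclude that $\nu_Y(e)-e$ is a type-definable subgroup of $(\CF,+)$ for every $e\in Y$ with $\dpr(e/A')=n$, the same group for all such $e$. To replace ``over $A'$'' by ``over $A$'', given $a\in Y$ with $\dpr(a/A)=n$ one reruns the construction with $\langle b,c,d\rangle$ generic over $Aa$ and, in the appeal to Lemma~\ref{L:generic smaller ball} inside the proof of Lemma~\ref{L:generic linearity for K}, additionally demanding that the chosen parameters not lower $\dpr(a/\cdot)$; a short exchange computation then yields $\dpr(a/A')=n$, so $\nu_Y(a)-a$ is the subgroup of that run, and any two runs give the same subgroup (compare them at a point generic over the union of their parameter sets). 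Hence $\nu_Y(a)-a$ is independent of $a\in Y$ with $\dpr(a/A)=n$; this is $\nu_Y$.

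For part~(2), work over $A\cup A_0$ where $A_0$ is a parameter set for $Y_0$. If $\dpr(Y\cap Y_0)=n$, then $Y\cap Y_0$ is strongly internal (restrict the injection of $Y$) of dp-rank $n$, and matching $\nu_{Y\cap Y_0}$ with $\nu_Y$ and with $\nu_{Y_0}$ at a common generic point of $Y\cap Y_0$, via Lemma~\ref{internal-locally the same} and part~(1), gives $\nu_Y=\nu_{Y\cap Y_0}=\nu_{Y_0}$. If $\dpr(Y\cap Y_0)<n$, then $Y':=Y\setminus Y_0$ still has dp-rank $n$, is disjoint from $Y_0$, and $\nu_{Y'}=\nu_Y$ by Lemma~\ref{internal-locally the same}; now $Z:=Y'\sqcup Y_0$ is strongly internal (combine the two injections, recording the piece in one extra coordinate, which is well-defined since $Y'\cap Y_0=\emptyset$) of maximal dp-rank $n$, so part~(1) applies to $Z$. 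Evaluating the base-point-independent $\nu_Z$ at a generic point of $Z$ lying in $Y'$ gives $\nu_Z=\nu_{Y'}=\nu_Y$, and at one lying in $Y_0$ gives $\nu_Z=\nu_{Y_0}$; hence $\nu_Y=\nu_{Y_0}$.

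The main obstacle I anticipate is the passage from Lemma~\ref{L:generic linearity for K} to Lemma~\ref{addition closure}: the former only produces a full-dp-rank definable subset $S$ of $Y^2$, not a genuine product of subsets of $Y$, carried into $Y$ by $(x-b)y+z$, so Lemma~\ref{addition closure} must be used in the relaxed form where ``$Y_1+Y_2\subseteq Y_3$'' is replaced by ``there is a full-rank definable $S\subseteq Y_1\times Y_2$ with $y_1+y_2\in Y_3$ on $S$'', and one must verify (via Lemma~\ref{L:locally the same}) that its proof is unaffected, since it only ever inspects infinitesimal neighbourhoods of generic points. The remaining delicate point is the parameter bookkeeping that makes $\nu_Y(a)-a$ depend only on $a$ being generic over $A$; everything else is a routine assembly of facts already in hand.
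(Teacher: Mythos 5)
Your ``relaxed'' use of Lemma~\ref{addition closure} is sound only for the clauses that are localized at the chosen generic point of $S'$: since $\dpr(S')=\dpr(Y_1\times Y_2)$ and $p=\la \lambda c,d\ra$ is generic, Lemma~\ref{L:locally the same} does give $\nu_{Y_1\times Y_2}(p)\vdash S'$, so the maps $F$ and $H$ can be run at $p$ and one gets that $\nu_{Y_1}(\lambda c)-\lambda c=\nu_Y(d)-d$ is a type-definable subgroup. But the clause you then lean on --- ``the same group for every $e\in Y$ with $\dpr(e/A')=n$'' --- is the second half of Lemma~\ref{addition closure}(1), and its proof does \emph{not} go through verbatim: there one fixes, for a new generic $d'$, some $c\in Y_1$ with $\dpr(c/dd'A)=k$ and applies the first clause to \emph{both} pairs $\la c,d\ra$ and $\la c,d'\ra$, which requires both pairs to lie in the domain of addition. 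That is automatic when $Y_1+Y_2\sub Y_3$, but fails in your setting: a definable $S'\sub Y_1\times Y_2$ of full dp-rank may have empty fibre over a point $d'$ that is generic over $A'$ (full dp-rank of $S'$, or of its projection, does not make the complement of the projection small), so no admissible $c$ need exist. Since your passage from ``the subgroup produced by the run'' to ``$\nu_Y(a)-a$ for the given $a$ with $\dpr(a/A')=n$'', and your comparison of two runs at a point generic over both parameter sets, both invoke exactly this clause, the independence of the base point in (1) --- and hence the well-definedness of $\nu_Y$ on which your (2) rests --- is not established.

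This is precisely where the paper's proof diverges from yours: it uses Lemma~\ref{L:generic smaller ball} to shrink $S$ to a genuine product $Y_1\times Y_2$ of definable $\tau_Y$-open neighbourhoods of $c$ and $d$ (over $B_0$ keeping $\dpr(c,d/B_0)=2n$), so that Lemma~\ref{addition closure} applies as stated to $(b_1-b)Y_1+Y_2\sub Y$; and it obtains base-point independence not from the second clause of Lemma~\ref{addition closure} applied to $Y$, but by rerunning the construction for $d$ and $d'$ with a \emph{common} $c$ (and common $b$, $b_1$), so that $\nu_Y(d)-d$ and $\nu_Y(d')-d'$ are both identified with $\nu_{(b-b_1)Y}(c')-c'$. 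Your argument needs one of these two devices inserted at the flagged point; the rest is fine --- in particular your part (2) works once (1) is complete (the disjointness manoeuvre there is unnecessary, since a union of two strongly internal sets is strongly internal by tagging an extra coordinate, which is essentially the paper's $Y'=Y\cup Y_0$ argument).
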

\begin{proof}
(1) We  return to our one dimensional $I\sub \CF$ which is strongly internal to $K$ (Corollary \ref{C:existence of I}). We fix any $b\in I$
and $c\in Y$ such that $\dpr(b,c,d/A)=2n+1$ and apply  Lemma \ref{L:generic linearity for K}.

We obtain $A\subseteq B$-definable $J\sub I$ and $S\sub Y\times Y$ containing $\la c,d\ra$,  with  $\dpr(J)=1$, $\dpr(S)=2n$, and $B\ni b$,  such that the map $(x-b)y+z$ sends $I\times S$ into $Y$ and such that $\dpr(c,d/B)=2n$. Because $\la c,d\ra \in S\subseteq Y\times Y$ has maximal dp-rank, it follows by Lemma \ref{L:generic smaller ball} that there exist $B\subseteq B_0$-definable $\tau_{Y}$-open definable sets $Y_1,Y_2\sub Y$, neighborhoods of $c$ and $d$, respectively, such that $Y_1\times Y_2\sub S$ and $\dpr(c,d/B_0)=\dpr(c,d/B)=2n$.

Since $\dpr(J\times S)=2n+1$, we can find $b_1\in J$ with $\dpr(b_1,c,d/B_0)=2n+1$. Let $Y_1'=(b_1-b)Y_1$ and $c'=(b-b_1)c$.
By our assumptions, $Y_1'+Y_2\sub Y$ and $\dpr(Y_1')=\dpr(Y_2)=\dpr(Y)$. Therefore,  the assumptions of Lemma \ref{addition closure} are satisfied, with $\dpr(c',d/b_1B_0)=2n$. Thus, $\nu_{Y_2}(d)-d$ is a subgroup of $(\CF,+)$, equal to $\nu_{Y_1'}(c')-c'$.
By Lemma \ref{internal-locally the same}, $\nu_{Y_2}(d)=\nu_Y(d)$ and $\nu_{Y_1'}(c)-c=\nu_{(b-b_1)Y}(c')-c'$. Thus, $\nu_Y(d)-d=\nu_{(b-b_1)Y}(c')-c'$ is a subgroup of $(\CF,+)$.

If we now have $d'\in Y$ such that $\dpr(d'/A)=n$, then we choose $c\in Y$ with $\dpr(c,d/A)=\dpr(c,d'/A)=2n$, and then $b_1\in J$ such that $\dpr(b_1,c,d/B_0)=\dpr(b_1,c,d'/B_0)=2n+1$. Repeating the above argument we conclude that
\[\nu_Y(d)-d=\nu_{(b-b_1)Y}(c')-c'=\nu_Y(d')-d'.\]

(2) Let $Y_0\subseteq \CF$ be any set strongly internal to $K$ with $\dpr(Y_0)=n$ and let $Y'=Y\cup Y_0$. It follows from   Lemma \ref{internal-locally the same} that $\nu_Y=\nu_{Y'}=\nu_{Y_0}$.
\end{proof}

\begin{corollary}\label{C:invariant under mult}
The partial type $\nu$ is invariant under multiplication by scalars from $\CF$.
\end{corollary}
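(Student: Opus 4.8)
The plan is to write $\nu$ as $\nu_Y(d)-d$ for a generic $d$, push this identity through multiplication by $\lambda$, and recognise the result as $\nu$ again via Proposition \ref{P:infinit}.

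Fix $\lambda\in\CF\setminus\{0\}$ and let $m_\lambda\colon\CF\to\CF$, $x\mapsto\lambda x$, a definable automorphism of $(\CF,+)$. Let $Y\subseteq\CF$ be strongly internal to $K$ over $A$ of maximal dp-rank $n$, so that $\nu=\nu_Y$. I would first observe that $\lambda Y=m_\lambda(Y)$ is again strongly internal to $K$: if $g\colon Y\hookrightarrow K^n$ is an $A$-definable injection, then $g\circ m_\lambda^{-1}$ is an $(A\cup\{\lambda\})$-definable injection of $\lambda Y$ into $K^n$. Since $m_\lambda$ is a bijection, $\dpr(\lambda Y)=\dpr(Y)=n$ is still maximal, so by Proposition \ref{P:infinit}(2) we have $\nu_{\lambda Y}=\nu_Y=\nu$.

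Write $B=A\cup\{\lambda\}$ and choose $d\in Y$ with $\dpr(d/B)=n$; this is possible in a sufficiently saturated model because $\dpr(Y)=n$, and then $\dpr(d/A)=n$ as well. By Proposition \ref{P:infinit}(1) we have $\nu_Y(d)-d=\nu$ (the infinitesimal neighbourhood being independent of the injection chosen, Lemma \ref{unique topology}), and likewise, since $\dpr(\lambda d/B)=\dpr(d/B)=n=\dpr(\lambda Y)$, we have $\nu_{\lambda Y}(\lambda d)-\lambda d=\nu_{\lambda Y}$. Applying Lemma \ref{function} to the injective $B$-definable map $m_\lambda\colon Y\to\lambda Y$ at the point $d$, which satisfies $\dpr(d/B)=\dpr(Y)$, yields $m_\lambda\bigl(\nu_Y(d)\bigr)=\nu_{\lambda Y}(\lambda d)$, that is $\lambda\cdot\nu_Y(d)=\nu_{\lambda Y}(\lambda d)$.

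Combining all of this,
\[
\lambda\cdot\nu=\lambda\cdot\bigl(\nu_Y(d)-d\bigr)=\lambda\cdot\nu_Y(d)-\lambda d=\nu_{\lambda Y}(\lambda d)-\lambda d=\nu_{\lambda Y}=\nu,
\]
and since $\lambda$ was an arbitrary nonzero element of $\CF$, the partial type $\nu$ is invariant under scalar multiplication. I do not foresee a real difficulty here: the only points requiring a little care are the routine verification that $\lambda Y$ is strongly internal to $K$ of maximal dp-rank and the standard existence of a generic $d\in Y$ over the enlarged parameter set $B$; after that the statement follows directly from Lemmas \ref{function} and \ref{unique topology} and Proposition \ref{P:infinit}.
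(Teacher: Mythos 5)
Your proof is correct and follows essentially the same route as the paper: choose $d\in Y$ generic over the parameters together with the scalar, push $\nu_Y(d)$ through the multiplication map via Lemma \ref{function}, and identify $\nu_{\lambda Y}(\lambda d)-\lambda d$ with $\nu$ using Proposition \ref{P:infinit}. Your version is merely a little more explicit about the parameter set $B=A\cup\{\lambda\}$, the strong internality of $\lambda Y$, and the (sensible) restriction to $\lambda\neq 0$, which the paper leaves implicit.
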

\begin{proof}
    Let $c\in \CF$ and let $Y\subseteq \CF$ be any definable subset strongly internal to $K$ over $\0$, of maximal dp-rank $n$.
    Let $d\in Y$ be such that $\dpr(d/c)=n$, so also $\dpr(c\cdot d)=n$. The function $x\mapsto cx$ sends $Y$ to $cY$, and by lemma \ref{function}, it sends $\nu_Y(d)$ onto $\nu_{cY}(cd)$. Hence,
    $$c\nu=c(\nu_Y(d)-d)=c\nu_Y(d)-cd=\nu_{cY}(cd)-cd=\nu.$$\end{proof}

\section{Embedding the field $\CF$ into $M_n(K)$}
Assume that $K$ is a P-minimal field. 
In the present section we prove that $\CF$ is in definable isomorphism with a finite extension of $K$. We do so by identifying $\CF$ with a subfield of $M_n(K)$. Using dp-minimality  of $K$, we show that $\CF$ must be a finite extension of the canonical embedding of $K$ into $M_n(K)$.

In order to embed $\CF$ into the ring of matrices we need to endow the additive subgroup $\nu$ introduced in the previous section,
with a $\CD$-structure with respect to $K$. That is, we will see that $\nu$ is an open set, and that group operations are differentiable. Recall,


\begin{definition}
Given $U\subseteq K^n$ open, a map $f:U\to K^m$, is \emph{differentiable} at $x_0\in U$ if there exists a linear map $D_{x_0}f:K^n\to K^m$ such that 
\[\lim_{x\to x_0}\frac{f(x)-f(x_0)-D_{x_0}f(x-x_0)}{x-x_0}=0.\]
\end{definition}

If $f$ is differentiable at a point $x_0$ we  say that $x_0$ is a $\CD^1$-point of $f$ and that $f$ is $\CD^1$ at $x_0$.
It is easy to see that if $f$ is definable then set of $x_0$'s at which $f$ is $\CD^1$ is definable as well. 
For the purposes of the present section we assume:

\begin{equation*}\tag{A}
\parbox{\dimexpr\linewidth-4em}{
For every definable open $D\subseteq K^n$, and definable $f:D\to K$,  there exists an open subset $U\subseteq K^n$ such that $f\restriction U\cap D$ is \emph{differentiable}.}
\end{equation*}

\begin{remark}
It is standard to see that (A) implies that for any such $\emptyset$-definable $f:D\to K$, any $d\in D$ with $\dpr(D)=\dpr(d)$ is a $\CD^1$-point of $f$.
\end{remark}

By \cite[Proposition 4.6]{SimWal} if $S\sub K^n$ has dimension $d$ then there exists an open set $U\sub K^n$ and a projection $\pi:K^n \to K^d$ such that $\pi(U)$ is open and  $\pi: (U\cap S)\to K^d$ is a homeomorphism onto its image. Thus, if $Y\sub \CF$ is strongly internal to $K$ with $\dpr(Y)=n$ we can find $Y_0\sub Y$ and a definable injection $g_0: Y_0\to K^n$ with open image.  This is the setting for the following proof, due to Marikova \cite{MarikovaGps}, who proved it for continuity in the o-minimal context. The exact same proof goes through for any property that is generically true for all definable functions.
\begin{proposition}\label{P:C1 structure}
    Let $Y\subseteq \CF$ be a definable subset with dp-rank $n$, $g:Y\to K^n$ an injective $A$-definable map with open image, and let  $\CK$ be  a $|K|^+$-saturated elementary extension of $K$. Then there exists $a\in Y$, with $\dpr(a/A)=\dpr(Y)$, such that,  
    \begin{enumerate}
        \item The map $ x\mapsto g(x+a)$ induces on $\nu(\CK)$ a structure of a $\CD^1$-group, namely, the group operations are $\CD^1$ when read via $g$.
        \item Any $K$-definable endomorphism $\alpha:(\CF(\CK),+)\to (\CF(\CK),+)$ sending $\nu(\CK)$ to itself is a $\CD^1$-map with respect to the above differential structure on $\nu$.
    \end{enumerate}
\end{proposition}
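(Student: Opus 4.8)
The plan is to follow Marikova's argument from \cite{MarikovaGps}, transporting it into the present setting where the generically-satisfied property is $\CD^1$ rather than continuity. Write $m = g\circ (+) \circ (g^{-1}\times g^{-1})$ and $i = g\circ(-)\circ g^{-1}$ for the addition and inversion read through $g$, defined on the relevant subsets of $g(Y)^2$ and $g(Y)$. These are $A$-definable maps into $K^n$, hence by hypothesis (A) (together with the Remark following it) they are $\CD^1$ at every point of maximal dp-rank of their domains. The first task is to choose $a\in Y$ with $\dpr(a/A) = n$ cleverly enough that, after translating by $a$, every point of $\nu(\CK)$ becomes a point where all the structure maps of the group read via $g$ are $\CD^1$. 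Here one uses that elements of $\nu(\CK)-$type realizations are, by the very definition of $\nu_Y(a)$ and Lemma \ref{unique topology}, ``generic'' for the local topology at $a$; more precisely, if $d\in Y$ with $\dpr(d/A)=n$ and $d' \models \nu_Y(d)$ in $\CK$, then $\dpr(d'/A) = n$ as well (a point in the infinitesimal neighbourhood of a generic point is again generic, since every $A$-definable set containing $d$ of lower rank is avoided by a $\tau_Y$-open neighbourhood of $d$). Thus $g(d')$ is a maximal-rank point of the domains of $m(\cdot, g(d))$, $m(g(d),\cdot)$, $i$, etc., so these are $\CD^1$ there. Choosing $a := d$ and noting that $x\mapsto x+a$ carries $\nu(\CK)$ (which is $\nu_Y(a)-a$ by Proposition \ref{P:infinit}(1)) onto $\nu_Y(a)$, part (1) follows once one checks the standard bookkeeping that the restriction of a $\CD^1$-group law to a translate of an open generic neighbourhood is again $\CD^1$, using the chain rule for $\CD^1$-maps (which is elementary from the definition of differentiability given).

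For part (2), let $\alpha:(\CF(\CK),+)\to(\CF(\CK),+)$ be a $K$-definable endomorphism with $\alpha(\nu(\CK))\subseteq \nu(\CK)$. The point is that $\alpha$ need not a priori be differentiable at $0$, but we can correct for this by a translation inside $\nu$ exactly as in part (1): the map $\alpha$, read through $g$ near a maximal-rank point, is a $K$-definable function into $K^n$, so by (A) it is $\CD^1$ at every point of maximal dp-rank of its domain. Since $\alpha$ is additive, differentiability at one point of $\nu$ (after the translation arranged in (1)) propagates to all of $\nu$ by the identity $\alpha(x+h) = \alpha(x) + \alpha(h)$, which lets one express the local behaviour of $\alpha$ at any point of $\nu$ in terms of its behaviour at the chosen base point; one has to be a little careful because $\alpha$ is defined on all of $\CF$ but we only control $\nu$, so the translations involved must stay inside $\nu$, which is exactly what the group structure of $\nu$ from part (1) guarantees. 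The final step is to reconcile the differential structure on $\nu$ coming from $g(\cdot + a)$ in (1) with the one implicitly used to differentiate $\alpha$: since by Lemma \ref{unique topology} and Corollary following the definition of $\nu_Z(d)$ the infinitesimal neighbourhood — and hence the local topology, and with hypothesis (A) the local $\CD^1$-structure — is independent of the chosen $A$-definable injection, these agree, and $\alpha$ is $\CD^1$ with respect to the structure fixed in (1).

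The main obstacle I expect is the careful choice of the single parameter $a$ in part (1): it must simultaneously be of maximal dp-rank over $A$ and make $g(\cdot + a)$ turn the \emph{whole} type-definable set $\nu(\CK)$ into $\CD^1$-points of all the finitely many structure maps (addition in both variables, inversion, and in (2) the map $\alpha$) at once. This is handled by observing that each ``bad locus'' — the set of points where one of these maps fails to be $\CD^1$ — is an $A$-definable subset of the relevant domain of dp-rank strictly less than $n$ by hypothesis (A) via Lemma \ref{F:cont-points}'s analogue (indeed directly by the Remark after (A)), so any realization of $\nu_Y(a)$, being of rank $n$ over $A$, avoids all of them; the only real content is checking that finitely many such conditions can be imposed together, which is immediate since $\nu_Y(a)$ is closed under the relevant finite Boolean combinations of generic conditions. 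A secondary subtlety is that $\nu(\CK)$ is only type-definable, so statements like ``$m$ is $\CD^1$ on $\nu\times\nu$'' must be read pointwise on realizations in the saturated model $\CK$ — this is precisely why the proposition is stated over $\CK$ — and the group-theoretic identities used to propagate differentiability must be applied to individual realizations rather than to the partial type as a whole.
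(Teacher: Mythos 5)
Your part (2) is essentially the paper's argument (take one realization $e\models\nu$ with $\dpr(e/K)=n$, apply (A) there, and propagate by additivity using the translations from part (1)), but your part (1) has a genuine gap at its central step. You argue that any realization $d'\models\nu_Y(d)$ has $\dpr(d'/A)=n$ and conclude that the group operations read via $g$ are $\CD^1$ at the relevant points because those points are "of maximal rank". This does not work, for two reasons. First, assumption (A) (via the Remark) gives $\CD^1$-ness of a map only at points of maximal dp-rank \emph{over the parameters defining that map}: the sections $m(\cdot,g(d))$, $m(g(d),\cdot)$ you invoke are $Ad$-definable, and realizations of $\nu_Y(d)$ are in general far from generic over $Ad$ --- indeed $d'=d$ itself (the identity of $\nu$ after translation) realizes $\nu_Y(d)$ and has rank $0$ over $Ad$. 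Second, and more fundamentally, what must be proved is that the \emph{two-variable} addition is jointly $\CD^1$ at every pair in $\nu(\CK)^2$, and such pairs include degenerate ones like $\la u,u\ra$, $\la u,-u\ra$, or pairs involving $0$; these have dp-rank well below $2n$ over $A$, so they may very well lie in the (definable, rank $<2n$ but possibly rank $2n-1$) non-$\CD^1$ locus of $m$. Genericity of single realizations over $A$ (which, as you note, does follow from the frontier-rank inequality) simply does not control these pairs, and "closure of $\nu_Y(a)$ under finitely many generic conditions", as in your final paragraph, is exactly what fails: the conditions in question are conditions on non-independent tuples of realizations.

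This is precisely the difficulty Marikova's trick is designed to overcome, and it is how the paper proceeds: after translating so that $\nu\vdash Y$ and using compactness to find a chain of $K$-definable open sets $V_4\subseteq\dots\subseteq V_0$ implied by $\nu$ between which the relevant maps are defined, one picks $a,b\in V_4$ with $\dpr(a,b)=2n$ and factors $\la x,y,z\ra\mapsto x-y+z$ as $\phi_4\circ\phi_3\circ\phi_2\circ\phi_1$, where each $\phi_i$ (with the auxiliary parameter $b$ frozen where needed) is evaluated at a point of \emph{full} rank over its defining parameters --- e.g.\ the two-variable sum is only ever evaluated at pairs such as $\la b+a,-a\ra$ and $\la -b,b+a\ra$, which have rank $2n$. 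The chain rule then makes $x-y+z$ $\CD^1$ at $\la a,a,a\ra$, definability of the $\CD^1$-locus upgrades this to a $K$-definable open $U\ni a$ with the map $\CD^1$ on $U^3$, and since $\nu_Y(a)\vdash U$, the transported operations on $\nu(\CK)$, being the specializations $\la x,y\ra\mapsto(x-a)+(y-a)+a$ and $x\mapsto -(x-a)+a$ of that map, are $\CD^1$ at \emph{all} realizations, degenerate tuples included. Your proposal is missing this mechanism for passing from $\CD^1$-ness at a single generic point to $\CD^1$-ness on a definable open set containing the whole infinitesimal type, and without it the pointwise genericity argument does not close.
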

\begin{proof}
All group operations appearing in the proof are the restriction to $Y$ of $\CF$ addition (and subtraction).

Let $d\in Y$ be such that $\dpr(d/A)=\dpr(Y)$. By replacing $Y$ with $Y-d$, we may assume that $\nu \vdash Y$. Now absorb $A$ and $d$ into the language. In order to keep notation simple we identify $Y$ with its image under $g$ (so $g=\id$). \\

\noindent (1)
Note that $\nu=\{U\subseteq K^n: U \text{ is $K$-open and } 0_{\mathcal{F}}\in U\}$ and thus $\nu(\CK)$ is open in $\CK^n$.



        Since $\nu$ is a group, type definable over $K$, there are, by compactness, $K$-definable open sets $V_1, V_0$, such that $\nu\vdash V_1\subseteq V_0\sub K^n$ and
        \[\phi_4:\la x,y\ra \mapsto x+y : V_1^2\to V_0.\]
        Similarly, we find $V_2\subseteq V_1$ such that
        \[\phi_3: \la x,y,z\ra\mapsto \la -z,x+y\ra : V_2^2\to V_1^2.\] We also find
        $V_3\subseteq V_2$ such that
        \[\phi_2: \la x,y,z\ra\mapsto \la x+y,z\ra : V_3^3\to V_2^2,\]
        and $V_4\subseteq V_3$ such that
        \[\phi_1: \la x,y,z,w\ra\mapsto \la w+x,-y,z\ra : V_4^3\to V_3^3.\]
        We may assume that all the above are $\emptyset$-definable. Let $a,b\in V_4$ with $\dpr(a,b)=2\dpr(Y)=2\dpr(V_4)$. By assumption (A), $\varphi_1(x,y,z,b)$ is $\CD^1$ at $(a,a,a)$, $\varphi_2$ is $\CD^1$ at $(b+a,-a,a)$, $\varphi_3(x,y,b)$ at $(b,a)$ and $\varphi_4$ at $(b^{-1},b+a)$. Composing, we obtain $\varphi_4\circ\varphi_3\circ\varphi_2\circ\varphi_1(x,y,z)=x-y+z$, so we have shown that the map $\la x,y,z\ra \mapsto x-y+z$ is $\CD^1$ at $(a,a,a)$. 
        In fact, the proof actually provides an open set, $U\sub V_4$ which, by compactness, we may take to be $K$-definable, with $a\in U$, such that $\la x,y,z\ra\mapsto x-y+z:U^3\to V_0$ is $\CD^1$.

    Our goal is to show that the push-forward of $+\restriction \nu^2$ and $-()\restriction\nu$ under the map $x\mapsto x+a$ are $\CD^1$ in the sense of $K$. Namely, we need to prove that the functions
    $(x-a)+(y-a)+a$ and $-(x-a)+a$ on $\nu_Y(a)^2$ and $\nu_Y(a)$, respectively, are  $\CD^1$. Both follow immediately from our choice of $U$. \\




        \noindent (2) 
        Let $e\in \nu(\CK)$ with $\dpr(e/K)=\dpr(\nu)=n$. By assumption $(A)$, $e$ is a $\CD^1$-point of $\alpha$. Since $\alpha$ is a homomorphism and $\nu$ is a $\CD^1$-group, $\alpha$ is a $\CD^1$-function on $\nu$. 
\end{proof}

We need the following easy and well known fact:
\begin{remark}
    If $K$ is a dp-minimal field, then $K$ has no definable infinite subfields. Indeed, if $L$ were such a field then $L$ itself is dp-minimal. And if we had some $v\in K\setminus L$ we could define $T:L^2\to K$ by $(a,b)\mapsto a+bv$. Since $v$ is $L$-linearly independent of $1$, we get that $T$ is a linear injection, so $\dpr(T(L^2))=2$ which is impossible.
\end{remark}

\begin{proposition}
        The field $\CF$ is definably isomorphic to a finite extension of $K$.
\end{proposition}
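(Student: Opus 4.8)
The strategy is to use the infinitesimal subgroup $\nu$ and its $\CD^1$-structure from Proposition \ref{P:C1 structure} to realize $\CF$ inside a matrix ring $M_n(K)$, and then to pin down the image using dp-minimality of $K$. First I would fix a definable $Y \subseteq \CF$ strongly internal to $K$ of maximal dp-rank $n$, together with a definable injection $g : Y_0 \to K^n$ with open image for some $Y_0 \subseteq Y$, and let $\CK$ be a $|K|^+$-saturated elementary extension, identifying $\nu = \nu(\CK)$ with an open subgroup of $K^n$ via the translate chosen in Proposition \ref{P:C1 structure}. For each $c \in \CF(\CK)$ the map $m_c : x \mapsto cx$ is an endomorphism of $(\CF(\CK),+)$ carrying $\nu$ into $\nu$ (Corollary \ref{C:invariant under mult}), hence by Proposition \ref{P:C1 structure}(2) it is a $\CD^1$-map on $\nu$; let $\rho(c) := D_0(m_c) \in M_n(K)$ be its differential at $0$. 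Since differentiation is linear and respects composition (chain rule, valid at points where both maps are $\CD^1$, here guaranteed generically and then everywhere on the group by the translation argument already used in Proposition \ref{P:C1 structure}), the map $\rho : \CF(\CK) \to M_n(K)$ is a ring homomorphism; it is injective because $\CF$ is a field and $\rho(1) = \mathrm{Id} \neq 0$, so $\ker \rho = 0$. Thus $\rho$ embeds $\CF(\CK)$ as a subfield of $M_n(K(\CK))$, and by elementarity we get a corresponding $\emptyset$- (or $A$-) definable embedding $\rho : \CF \hookrightarrow M_n(K)$ over the relevant parameters.

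Next I would identify the image. The set $\rho(\CF) \subseteq M_n(K)$ is a definable subfield of $M_n(K)$ containing the scalar copy $K \cdot \mathrm{Id}$ (since $\rho(\lambda \cdot 1_\CF)$ should be $\lambda \cdot \mathrm{Id}$ for $\lambda$ in the image of $K$ inside $\CF$ — here I need the image $K \hookrightarrow \CF$ coming from the fact that $\CF$ is interpretable in $K$, more precisely the prime-field-or-larger copy of $K$ acting; alternatively one shows directly that $\rho(\CF)$ contains $K\cdot\mathrm{Id}$ by computing $\rho$ on scalars). In any case $\rho(\CF)$ is a definable field extension of $K \cdot \mathrm{Id} \cong K$ sitting inside the finite-dimensional $K$-algebra $M_n(K)$, hence $\rho(\CF)$ is a finite-dimensional $K$-algebra which is a field, i.e. a finite field extension of $K$. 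The point where dp-minimality enters: a priori $\rho(\CF)$ could be a proper definable subfield of such a finite extension, or one must rule out $\rho(\CF)$ being "too small". But $\dpr(\CF) = \dpr(Y) \cdot (\text{something})$... more carefully, since $Y \subseteq \CF$ is strongly internal to $K$ of dp-rank $n$ and $\CF$ acts on $\nu \subseteq K^n$ faithfully, $\dpr(\CF) \le n$; and a finite extension $L/K$ of degree $m$ has $\dpr(L) = m$ as a $K$-structure, forcing $[L:K] = \dpr(\CF) \le n$, a finite number. So $\rho(\CF)$ is genuinely a finite extension of $K$.

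Finally, I would transfer back: the definable isomorphism $\CF \cong \rho(\CF)$ together with the identification of $\rho(\CF)$ with a finite field extension $K[\alpha]/(p(\alpha))$ — which is itself definable in $K$ via the coordinate representation $a_0 + a_1\alpha + \cdots + a_{m-1}\alpha^{m-1} \leftrightarrow (a_0,\dots,a_{m-1}) \in K^m$ — gives the desired definable isomorphism between $\CF$ and a finite extension of $K$ in the structure $K$. The "In particular" clause for $p$-adically closed fields follows because assumption (A) holds there by Denef's results on analytic-type / semialgebraic differentiability, as recorded earlier.

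**Main obstacle.** The hard part is establishing that $\rho$ is a well-defined ring homomorphism on all of $\CF(\CK)$, not just generically: the differential $D_0(m_c)$ must be computed at the group identity $0 \in \nu$, and the chain rule $D_0(m_{c} \circ m_{c'}) = D_0(m_c) \circ D_0(m_{c'})$ requires $m_{c'}$ to be $\CD^1$ at $0$ and $m_c$ to be $\CD^1$ at $m_{c'}(0) = 0$ — both fine — but additivity $D_0(m_{c+c'}) = D_0(m_c) + D_0(m_{c'})$ and the interaction with the group operation on $\nu$ (which is $K$-addition only after the translation normalization) need the full strength of Proposition \ref{P:C1 structure}: that with one good choice of base point, \emph{all} $c$-multiplications and the addition on $\nu$ become simultaneously $\CD^1$ and the differentials behave correctly. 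Handling the parameters carefully (the choice of $a$ in Proposition \ref{P:C1 structure} depends on $A$ but not on $c$, which is essential for $\rho$ to be definable) and then descending from $\CK$ back to $K$ is the fiddly bookkeeping that makes this step delicate rather than routine.
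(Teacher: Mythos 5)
Your construction of the embedding is essentially the paper's: differentiate $\lambda_z:x\mapsto zx$ at $0$ on the infinitesimal group $\nu$ (legitimate by Corollary \ref{C:invariant under mult} and Proposition \ref{P:C1 structure}), use the chain rule to see that $z\mapsto J_z:=D_0(\lambda_z)$ is a ring homomorphism into $M_n(K)$, note injectivity since $\CF$ is a field, and descend definability from $\CK$ back to $K$; the bookkeeping you worry about is handled exactly as you suggest. The gap is in the identification of the image. You assert that $\rho(\CF)$ contains the scalar matrices $K\cdot\mathrm{Id}$, justified via ``the image $K\hookrightarrow\CF$ coming from the fact that $\CF$ is interpretable in $K$''. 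No such embedding exists a priori: interpretability of $\CF$ in $K$ gives no map from $K$ into $\CF$ whatsoever, and producing a definable copy of $K$ inside $\CF$ is essentially the content of the theorem, not a hypothesis. Your fallback (``compute $\rho$ on scalars'') has the same defect --- there are no scalars in $\CF$ to compute on --- and the dp-rank count at the end ($\dpr(\CF)\le n$, degree of a finite extension equals its dp-rank) does not supply the missing containment either: without $K\cdot\mathrm{Id}\subseteq\rho(\CF)$ you cannot even view $\rho(\CF)$ as a $K$-subspace of $M_n(K)$, so the step ``finite-dimensional $K$-algebra which is a field, hence a finite extension'' is not yet available.

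The paper closes exactly this gap, and it is the one place where dp-minimality of $K$ is used: set $K_0=\{xI_n:x\in K\}$. Then $K_0\cap\rho(\CF)$ is a definable subfield of $K_0$ which is infinite because it contains $I_n$ and the characteristic is $0$, so it contains $\Qq\cdot I_n$. Since a dp-minimal field has no infinite proper definable subfield (the Remark preceding the proposition), $K_0\cap\rho(\CF)=K_0$, i.e. $K_0\subseteq\rho(\CF)$. Only then is $\rho(\CF)$ a $K_0$-subspace of $M_n(K)$, hence of dimension at most $n^2$ over $K_0\cong K$, hence a finite extension of $K$. Replacing your ``contains $K\cdot\mathrm{Id}$ because $K$ embeds in $\CF$'' step by this argument repairs the proof; the rest of your write-up then goes through.
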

    \begin{proof}
        Let $\nu$ be as before, let $Y\subseteq \CF$ be strongly internal of maximal dp-rank $n$ and assume that $g:Y\to K^n$ is the corresponding injective map.

        By Corollary \ref{C:invariant under mult}, for any $z\in \CF$ the function $\lambda_z: x\mapsto zx$ leaves the type $\nu$ invariant. By Proposition \ref{P:C1 structure}, $\lambda_z$ is a $\CD^1$ map on $\nu(\CK)$. 
        Thus, for every $z\in \CF$ there exists a definable neighborhood $V\ni 0_{\CF}$ such that $\lambda_z$ is $\CD^1$ on $V$.
        This is a first order property which holds in $\CK$ as well.
        Consequently, to each $z\in \CF$ may associate, definably, the Jacobian matrix $J_z\in M_n(K)$ of $\lambda_z$ at $0$.

        As in the proof of \cite[Lemma 4.3]{OtPePi}, an application of the chain rule \cite[Remark 4.1.ii]{p-adiclie}, implies that the map $z\mapsto J_z$ is a ring homomorphism  sending $1\in \CF$ to the identity matrix $I_n$. Since $\CF$ is a field, the map is injective so we may embed $\CF$ into a definable subring of $M_n(K)$.

        We may now view $\CF$ as a definable subfield of $M_n(K)$. Let $K_0=\{xI_n:x\in K\}$, where now we take the usual scalar multiplication in the algebra of matrices. Thus $K_0\cap F$ is an infinite definable subfield of $K$ (as they both contain $I_n$ and are of characteristic $0$). Since $K_0\cong K$ is dp-minimal, it has no infinite definable subfield, so $K_0\cap \CF=K_0$ i.e. $K_0\subseteq \CF$. Thus $\CF$ is a finite extension of $K_0$.
    \end{proof}


Summing all the above we get:

\begin{theorem}\label{T:main thm}
    Let $K$ be a $P$-minimal field $K$. If for every definable $f:D\to K$, $D\subseteq K^n$ with non-empty interior, there exists an open subset $U\subseteq K^n$ such that $f\restriction U\cap D$ is differentiable then every infinite interpretable field is definably isomorphic to a finite extension of $K$.
\end{theorem}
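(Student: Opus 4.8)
\textbf{Proof plan for Theorem \ref{T:main thm}.}

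The statement is essentially the conjunction of everything proved in Sections 2--5, so the plan is to assemble those pieces in order. First I would fix an infinite interpretable field $\CF$ and observe that, since all the intermediate results are preserved under passing to an elementary extension, we may work in an $\aleph_0$-saturated (indeed $|K|^+$-saturated) model. Applying Lemma \ref{1-dim} and then Corollary \ref{C:existence of I} (whose proof invokes Proposition \ref{Internal} and Proposition \ref{nointernal} to rule out the $\Gamma$ and $K/\CO$ cases), I obtain a definable dp-minimal $I\subseteq\CF$ that is strongly internal to $K$. Next, among all definable subsets of $\CF$ strongly internal to $K$ one chooses $Y$ of maximal dp-rank $n$ (such a maximum exists because dp-rank is additive and an injection into $K^m$ forces dp-rank at most... well, it is bounded by the dp-rank of $K^m$, hence by any fixed $m$ chosen large enough; more carefully, maximality is over the values of $\dpr$, which are natural numbers, so a maximum exists). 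With $I$ and $Y$ in hand, Lemma \ref{L:generic linearity for K} provides the ``generic linearity'' input: a generic affine combination $(x-b)y+z$ maps a full-dp-rank subset $J\times S$ of $I\times Y^2$ into $Y$.

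The second block is Section 4. Using Proposition \ref{P:infinit}, the infinitesimal neighborhood construction yields a type-definable subgroup $\nu$ of $(\CF,+)$, independent of the choice of generic point and of the choice of strongly internal $Y$ of maximal dp-rank; Corollary \ref{C:invariant under mult} shows $\nu$ is invariant under multiplication by every scalar from $\CF$. The third block is Section 5, where the differentiability hypothesis (A) of the theorem enters. One picks $Y_0\subseteq Y$ with a definable injection $g_0:Y_0\to K^n$ onto an open set, using \cite[Proposition 4.6]{SimWal}; then Proposition \ref{P:C1 structure} endows $\nu(\CK)$ with the structure of a $\CD^1$-group and shows that every $K$-definable additive endomorphism of $\CF(\CK)$ fixing $\nu(\CK)$ setwise is $\CD^1$ with respect to it. Since each multiplication map $\lambda_z:x\mapsto zx$ on $\CF$ is such an endomorphism (it is additive and, by Corollary \ref{C:invariant under mult}, preserves $\nu$), each $\lambda_z$ is $\CD^1$ near $0$, so one can assign to $z$ its Jacobian $J_z\in M_n(K)$ at $0$; the chain rule (in the $p$-adic/$\CD^1$ setting, \cite[Remark 4.1.ii]{p-adiclie}, as in \cite[Lemma 4.3]{OtPePi}) makes $z\mapsto J_z$ a ring homomorphism sending $1_\CF$ to $I_n$, hence (as $\CF$ is a field) an injective definable ring embedding $\CF\hookrightarrow M_n(K)$. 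Finally, intersecting the image with the scalar matrices $K_0=\{xI_n:x\in K\}$ gives an infinite definable subfield of the dp-minimal field $K_0\cong K$ (both contain $I_n$, both of characteristic $0$), which by the no-infinite-definable-subfield remark must be all of $K_0$; thus $K_0\subseteq\CF$ and $\CF$ is a finite extension of $K_0\cong K$. Descending back from $\CK$ to $K$ (the isomorphism with a finite extension is a first-order property of $\CF$, coded over a small parameter set) completes the argument.

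The only genuinely new content of this theorem relative to the lemmas already proved is (i) the existence of a \emph{maximal} dp-rank among strongly internal sets — a triviality given additivity of dp-rank and the fact that dp-rank takes values in $\Nn$ here — and (ii) the verification that the hypotheses of Proposition \ref{P:C1 structure} and of the Jacobian/chain-rule argument really are met by the $\nu$ and $Y$ produced earlier, plus the descent from $\CK$ back to $K$. I expect the main (and really the only) subtlety to be bookkeeping: keeping track of the parameter set $A$ over which $I$, $Y$, $\nu$, and the various generic points are defined, so that Proposition \ref{P:C1 structure}(2) can legitimately be applied to the maps $\lambda_z$ for \emph{all} $z\in\CF$ simultaneously (this is why $\nu$ must be shown independent of $Y$ and defined over $\emptyset$, via Proposition \ref{P:infinit}(2) and Corollary \ref{C:invariant under mult}), and so that the final first-order statement transfers down to $K$. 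The last paragraph of the excerpt's Proposition proof already carries this out; the theorem statement merely packages it together with the explicit differentiability hypothesis, and for $p$-adically closed $K$ that hypothesis is a known cell-decomposition consequence, giving the ``in particular'' clause of the introduction.
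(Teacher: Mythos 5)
Your proposal is correct and follows essentially the same route as the paper: the theorem is proved exactly by assembling Corollary \ref{C:existence of I}, Lemma \ref{L:generic linearity for K}, Proposition \ref{P:infinit}, Corollary \ref{C:invariant under mult} and Proposition \ref{P:C1 structure}, then the Jacobian map $z\mapsto J_z$ embedding $\CF$ into $M_n(K)$ and the scalar-matrix/no-infinite-definable-subfield argument. The points you flag (existence of the maximal dp-rank and the parameter bookkeeping for applying Proposition \ref{P:C1 structure} to all $\lambda_z$) are handled in the paper just as you describe, so there is nothing to add.
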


Let $\CL^{an}$ be the subanalytic language for the p-adics, see \cite{DenvdDr}.
For any prime $p$, let $\mathbb{Q}_p^{an}$ be the p-adic field in the subanalytic language.

\begin{corollary}\label{C:cor for Qpan}
Let $(K,v)$ be a valued field. Assume that either
\begin{enumerate}
    \item $K$ is elementary equivalent to $\mathbb{Q}_p^{an}$ in the subanalytic language $\CL^{an}$ or
    \item $K$ is p-adically closed.
\end{enumerate}
Then every infinite  field interpretable in $K$ is definably isomorphic to a finite extension of $K$.
\end{corollary}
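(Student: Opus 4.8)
The plan is to derive Corollary \ref{C:cor for Qpan} from Theorem \ref{T:main thm} by verifying, in each of the two cases, that the valued field $K$ in question is $P$-minimal and that definable functions $f:D\to K$ (with $D\subseteq K^n$ of non-empty interior) are generically differentiable, which is precisely the hypothesis required by the theorem. Case (2) is immediate: a $p$-adically closed field is $P$-minimal by the original results of Haskell--Macpherson, and generic differentiability of definable functions on subsets of $K^n$ with non-empty interior is classical for $p$-adically closed fields (it follows from cell decomposition in the Macintyre language, see \cite{HasMac}, and is in fact already encapsulated in the way assumption (A) was used in Section~6). Hence Theorem \ref{T:main thm} applies verbatim and every infinite interpretable field is definably isomorphic to a finite extension of $K$.

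For case (1), the point is that $\mathbb{Q}_p^{an}$, and anything elementarily equivalent to it in $\CL^{an}$, is a $P$-minimal field. This is one of the motivating examples of the Haskell--Macpherson paper \cite{HasMac}: the subanalytic structure on $\qp$ satisfies $P$-minimality, using the cell decomposition and analytic elimination results of Denef--van den Dries \cite{DenvdDr}. Moreover, subanalytic functions in several $p$-adic variables are generically (indeed, piecewise) analytic, and in particular generically differentiable on any definable domain with non-empty interior; again this follows from the analytic cell decomposition of \cite{DenvdDr}. Thus the differentiability hypothesis of Theorem \ref{T:main thm} is met, and we conclude as before. I would phrase the proof as: ``In case (2), $K$ is $P$-minimal and definable functions are generically differentiable by cell decomposition in the Macintyre language \cite{HasMac}. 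In case (1), $K$ is $P$-minimal by \cite{HasMac}, and definable functions $f:D\to K$ with $D$ of non-empty interior are generically analytic, hence generically differentiable, by the analytic cell decomposition of \cite{DenvdDr}. In either case Theorem \ref{T:main thm} gives the conclusion.''

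I do not expect any serious obstacle here, since the corollary is essentially a packaging of known facts about these two classes of fields together with the main theorem; the only thing to be careful about is citing the right statements — that $P$-minimality and generic differentiability are preserved under elementary equivalence in $\CL^{an}$ (both being first-order / axiomatic in the sense discussed in the introduction), so that it suffices to check them for $\mathbb{Q}_p^{an}$ itself and then transfer. One should also note explicitly that ``definable'' here means definable in the full structure (with the analytic functions), so that the generic differentiability statement must be the subanalytic one, not merely the semialgebraic one; this is exactly what \cite{DenvdDr} provides.
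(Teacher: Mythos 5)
Your proposal is correct and takes essentially the same route as the paper: reduce to Theorem \ref{T:main thm}, observe that $P$-minimality and the generic-differentiability hypothesis (A) are preserved under elementary equivalence because differentiability and non-empty interior are expressible in first-order families, and then invoke the known regularity results for $\mathbb{Q}_p^{an}$ and for finite extensions of $\mathbb{Q}_p$. The only divergence is in attribution: the paper cites \cite{vdDriesHasMac} (not \cite{HasMac}) for $P$-minimality of the subanalytic structure, and for case (2) it quotes the generic differentiability of semialgebraic functions from Scowcroft--van den Dries \cite{SccdDries} rather than deducing it from cell decomposition in the Macintyre language.
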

\begin{proof}
    In case of (1), $K$ is $P$-minimal by \cite[Theorem B]{vdDriesHasMac}). As a result it is sufficient to verify assumption (A). Since having non-empty interior is definable in families and being differentiable is definable (using the parameters needed to define the function), it is enough to check assumption (A) for $\emptyset$-definable functions whose domain has non-empty interior and assume that either $f$ is a definable function in $\mathbb{Q}_p^{an}$ or in a finite extension of $\mathbb{Q}_p$ (in the valued field language). Since every analytic map is differentiable \cite[Proposition 6.1]{p-adiclie}, the result follows readily from \cite[Proposition 3.29]{DenvdDr} in case (1) and \cite[Theorem 1.1]{SccdDries} in case (2)\footnote{See, also, the discussion in \cite[Section 5]{SccdDries}}.
\end{proof}




 \bibliographystyle{plain}
\bibliography{Bibfiles/harvard}
\end{document}